\documentclass[11pt]{article}
\usepackage[T1]{fontenc}
\usepackage[utf8]{inputenc}
\usepackage{ntheorem}
\usepackage{amssymb}
\usepackage[tbtags]{amsmath}
\usepackage{comment}
\usepackage{thm-restate}
\usepackage{latexsym}
\usepackage{url}
\usepackage[affil-it]{authblk}
\usepackage{hyperref}
\usepackage[noabbrev,capitalise]{cleveref}
\usepackage[demo]{graphicx}
\usepackage{tikz}
\usetikzlibrary{shadows}
\usepackage{subcaption}
\usepackage{caption}
\usepackage{float}
\usepackage{xcolor}
\usepackage[normalem]{ulem}

\usepackage{enumitem}
\usepackage{setspace}

\usepackage{mathrsfs}
\usepackage{wasysym}
\usepackage{fancyhdr}
\usepackage{graphicx}
\usepackage[numbers,sort&compress]{natbib}
\usepackage{calligra}

\definecolor{r1}{HTML}{FF8674}
\definecolor{b1}{HTML}{17ABDD}
\definecolor{p1}{HTML}{D4B6D6}
\definecolor{g1}{HTML}{70E2CB}
\definecolor{o1}{HTML}{DFA743}

\usetikzlibrary{calc,decorations.pathreplacing,decorations.markings,shapes.geometric}
\tikzstyle{vertice}=[circle,fill,inner sep=1.4pt]
\tikzset{
	on each segment/.style={
		decorate,
		decoration={
			show path construction,
			moveto code={},
			lineto code={
				\path [#1]
				(\tikzinputsegmentfirst) -- (\tikzinputsegmentlast);
			},
			curveto code={
				\path [#1] (\tikzinputsegmentfirst)
				.. controls
				(\tikzinputsegmentsupporta) and (\tikzinputsegmentsupportb)
				..
				(\tikzinputsegmentlast);
			},
			closepath code={
				\path [#1]
				(\tikzinputsegmentfirst) -- (\tikzinputsegmentlast);
			},
		},
	},
	mid arrow/.style={postaction={decorate,decoration={
				markings,
				mark=at position .6 with {\arrow[scale=1]{stealth}}
	}}},
	lid arrow/.style={postaction={decorate,decoration={
				markings,
				mark=at position .4 with {\arrow[scale=1]{stealth}}
	}}},
	llid arrow/.style={postaction={decorate,decoration={
				markings,
				mark=at position .53 with {\arrow[scale=1]{stealth}}
	}}},
	rid arrow/.style={postaction={decorate,decoration={
				markings,
				mark=at position .7 with {\arrow[scale=1]{stealth}}
	}}},
}

\usepackage{indentfirst}
\usepackage[verbose,tmargin=20mm,bmargin=20mm,lmargin=25mm,rmargin=25mm,headsep=20mm]{geometry}

\theorembodyfont{\normalfont}
\theoremseparator{}
\theoremstyle{plain}

\newtheorem{thm}{Theorem}[section]
\newtheorem{lem}[thm]{Lemma}

\newtheorem{conj}[thm]{Conjecture}

\newtheorem{obser}[thm]{Observation}

{\noindent \emph{Proof.} {}{}}{\hfill
	$\square$\vspace{1em}}

{\noindent \emph{Proof of claim $1$.} {}{#1}{}}{\hfill
	$\Diamond$\vspace{1em}}

\newlist{case}{enumerate}{1}
\setlist[case]{
	label={\textit{Case \arabic*}.}, 
	leftmargin=*,
	font=\itshape,
	itemsep=6pt,
	topsep=6pt
}

\newlist{subcase}{enumerate}{2}
\setlist[subcase]{
	label={\textit{Subcase \arabic{case}.\arabic*}.}, 
	leftmargin=*,
	font=\itshape,
	itemsep=3pt
}

\title{Feedback edge set in bipartite digraph}
\author{Bin Chen$^a$ \hskip 1cm Jianfeng Hou$^b$ \hskip 1cm  Siyue Liu$^b$}

\affil{
	{ \small $^a${School of Mathematics and Statistics, Fuzhou University, Fujian, China}}

{\small$^b$  {Center for Discrete Mathematics, Fuzhou University, Fujian, China}}
	
}

\date{}

\begin{document}
	
	
	\maketitle
	\begin{abstract}
		\quad Let \(\beta(G)\) denote the minimum size of a feedback edge set of a digraph \(G\), and let \(\gamma(G)\) denote the number of unordered pairs of nonadjacent vertices. Motivated by the Chudnovsky--Seymour--Sullivan conjecture for \(3\)-free digraphs, we study the corresponding feedback-edge problem for bipartite digraphs. In the bipartite setting, \(\gamma(G)\) is taken to count only nonadjacent pairs with ends in distinct partite sets. We prove that every \(4\)-free bipartite digraph \(G\) satisfies \(\beta(G)\le \gamma(G)/2\). We also determine the exact Tur\'an number of \(2k\)-free strong bipartite digraphs with partite sets \(X\) and \(Y\): if \(|X|,|Y|\ge k+1\), then the maximum number of edges is
		$$
		(|X|-(k-1))(|Y|-(k-1))+2k-2.
		$$
		Finally, for the extremal case \(k=2\), we analyze the structure of \(4\)-free strong bipartite Tur\'an digraphs and prove the sharper bound \(\beta(G)\le \gamma(G)/3\) for all such digraphs. This constant is attained by a natural balanced three-block construction.
	\end{abstract}

	{\bf Keywords}: {Bipartite digraph; feedback edge set; directed cycle; Tur\'{a}n number.
		
		{\bf 2020 Mathematics Subject Classification}: {05C20; \and 05C35; \and 05C38.}
		\baselineskip 15pt
		
		\section{Introduction}
		\label{sec:1}

Throughout this paper, all digraphs are finite, loopless, and have no parallel edges; opposite edges are allowed. Let \(G=(V,E)\) be a digraph. A \emph{directed \(k\)-cycle} is a directed cycle of length \(k\), and \(G\) is \emph{\(k\)-free} if it contains no directed cycle of length at most \(k\). A digraph is \emph{acyclic} if it contains no directed cycle, and is \emph{strong} if every vertex is reachable from every other vertex by a directed path.

Following \cite{chudnovsky2008cycles}, let \(\beta(G)\) be the minimum size of an edge set \(X\subseteq E(G)\) such that \(G\backslash X\) is acyclic, and let \(\gamma(G)\) be the number of unordered pairs of nonadjacent vertices in \(G\). Thus \(\beta(G)\) is the feedback edge number. Equivalently, \( |E(G)|-\beta(G)\) is the maximum number of edges in an acyclic spanning subdigraph of \(G\). The computation of this parameter is one of Karp's classical NP-complete problems \cite{karp1972reducibility}; from the extremal point of view, recent work of Fox, Himwich and Mani \cite{fox2024extremal} studies feedback edge sets in dense digraphs.

This paper concerns upper bounds on \(\beta(G)\) in terms of missing adjacencies under forbidden short-cycle assumptions. Chudnovsky, Seymour and Sullivan \cite{chudnovsky2008cycles} proved that every \(3\)-free digraph \(G\) satisfies \(\beta(G)\le \gamma(G)\), and conjectured the following sharp improvement.

\begin{conj}\label{1.1}\textnormal{(\cite{chudnovsky2008cycles})}
			If $G$ is a $3$-free digraph, then $\beta(G) \leq \frac{1}{2}\gamma(G)$.
		\end{conj}

Conjecture~\ref{1.1} remains open. In 2011, Dunkum, Hamburger and P\'or \cite{dunkum2011destroying} reduced the upper bound to $\beta(G) \leq 0.88\gamma(G)$. The best known general bound is \(\beta(G)\le 0.8616\,\gamma(G)\), due to Chen, Karson, Liu and Shen \cite{chen2015chudnovsky}. One reason for studying such feedback-edge inequalities is their connection with minimum out-degree conditions for short directed cycles. Chudnovsky et al. pointed out that progress on Conjecture~\ref{1.1} would improve known bounds toward the following conjecture of Caccetta and H\"aggkvist.

		\begin{conj}\label{1.2}\textnormal{(\cite{caccetta1978minimal})}
			Every digraph on $n$ vertices with minimum out-degree at least $n/3$ contains a directed triangle.
		\end{conj}

Indeed, even the weaker estimate \(\beta(G)\le\gamma(G)\) has consequences for Conjecture~\ref{1.2}: Hamburger, Haxell and Kostochka \cite{hamburger2007directed} used it to prove that every digraph on \(n\) vertices with minimum out-degree at least \(0.3532n\) contains a directed triangle, improving earlier thresholds of Caccetta and H\"aggkvist \((0.3820n)\), Bondy \cite{bondy} \((0.3798n)\), and Shen \cite{shen1998directed} \((0.3543n)\). Hladk{\'y}, Kr{\'a}l' and Norin \cite{hladky2009counting} later combined feedback-edge estimates with flag algebras to lower the threshold to \(0.3465n\). These applications show that bounds for \(\beta(G)\) under forbidden-cycle hypotheses are closely related to classical problems on short directed cycles.

We develop a bipartite analogue of this problem. In a bipartite digraph every directed cycle has even length, and hence the first short-cycle condition not forced by the bipartition is the absence of directed \(4\)-cycles, in addition to directed \(2\)-cycles. Bipartite short-cycle problems have recently been studied from a minimum out-degree perspective by Seymour and Spirkl \cite{seymour2020short}, which suggests that the bipartite setting has extremal features distinct from the non-bipartite case. The parameter \(\gamma(G)\) also requires a convention adapted to bipartite graphs: if \(G\) has partite sets \(X\) and \(Y\), then throughout the bipartite part of the paper \(\gamma(G)\) denotes the number of unordered nonadjacent pairs \(\{x,y\}\) with \(x\in X\) and \(y\in Y\). Nonadjacencies inside a partite set are automatic and are not counted.

Our first main result gives the corresponding feedback-edge estimate for \(4\)-free bipartite digraphs.

		\begin{thm}\label{1/2}
	If $G$ is a $4$-free bipartite digraph, then $\beta(G) \le \frac{1}{2}\gamma(G)$.
\end{thm}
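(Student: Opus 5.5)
The plan is to induct on $|V(G)|$. The inductive step takes a directed cut $(S,T)$ of $V(G)$ with both sides nonempty and deletes every edge from $T$ to $S$. Any directed cycle of the remainder lies inside $G[S]$ or inside $G[T]$, and both inherit bipartiteness and $4$-freeness. Write $\gamma(S,T)$ for the number of nonadjacent pairs $\{x,y\}$ with $x\in X$, $y\in Y$ and the two ends on different sides of the cut. Then
\[
\gamma(G)=\gamma(G[S])+\gamma(G[T])+\gamma(S,T),
\]
because every counted nonadjacent pair lies inside $S$, inside $T$, or across the cut. Therefore it suffices to find a cut with
\[
e(T,S)\le \tfrac12\,\gamma(S,T),
\]
where $e(T,S)$ is the number of edges from $T$ to $S$. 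The inductive hypothesis then gives $\beta(G)\le e(T,S)+\beta(G[S])+\beta(G[T])\le \frac12\gamma(G)$. The base case is an acyclic $G$, where $\beta(G)=0$. If $G$ is not strong, we cut along a terminal strong component; then $e(T,S)=0$ and nothing needs checking. So we may assume $G$ is strong.

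To construct the cut, fix a vertex $v\in X$ and split $Y$ into $N^+(v)$, $N^-(v)$ and the nonneighbours $M(v)$; the first two are disjoint since $G$ is $2$-free. The key consequence of $4$-freeness is this. No vertex $x\in X\setminus\{v\}$ has both an in-neighbour in $N^+(v)$ and an out-neighbour in $N^-(v)$, since otherwise $v\to a\to x\to b\to v$ is a directed $4$-cycle. So let $X^+$ be the set of vertices of $X\setminus\{v\}$ with an in-neighbour in $N^+(v)$. Put
\[
S=\{v\}\cup N^-(v)\cup\bigl(X\setminus(X^+\cup\{v\})\bigr)
\]
together with a suitable part of $M(v)$, and put $T=N^+(v)\cup X^+$ together with the rest of $M(v)$. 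The aim is to make every backward edge from $T$ to $S$ force a nonadjacency. For instance, an edge $b\to x$ with $b\in N^+(v)$ and $x\in S\cap X$ cannot exist, since then $x\in X^+$. An edge $x\to b$ with $x\in X^+$ and $b\in N^-(v)$ is forbidden outright by the $4$-cycle argument. The remaining backward edges involve $M(v)$ or run between $X^+$ and $N^-(v)$ in the other direction. Each of these should be charged to a pair $\{v,y\}$ with $y\in M(v)$, or to a nonadjacent pair between $X^+$ and $N^-(v)$, produced by the same $4$-cycle argument.

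The main obstacle is obtaining the factor $\frac12$ rather than $1$ in this charging. A single vertex $v$ naturally gives $e(T,S)\le\gamma(S,T)$, which mirrors the Chudnovsky--Seymour--Sullivan bound $\beta\le\gamma$. To gain the factor two, I would first try making the charging injective with multiplicity two: each backward edge $u\to w$ should have two distinct nonadjacent cross pairs attached to it, one at each endpoint, and each pair should be used at most once. The $4$-free condition gives exactly one nonadjacency per "blocked" $4$-cycle through $v$. The second pair would come from the extra freedom in placing the vertices of $M(v)$ on whichever side minimises the backward edges they create.

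If this local charging fails, I would fall back on averaging: sum the inequality $e(T_v,S_v)\le\gamma(S_v,T_v)$ over all choices of $v$, or over a random ordering built from these cuts. The hope is that each nonadjacent pair is counted across cuts at least twice as often as each edge is reversed, so that some $v$ achieves $e(T_v,S_v)\le\frac12\gamma(S_v,T_v)$. Whichever version works, the induction described in the first paragraph then finishes the proof.
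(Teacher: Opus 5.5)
Your reduction is sound: deleting the backward edges of a cut, the splitting $\gamma(G)=\gamma(G[S])+\gamma(G[T])+\gamma(S,T)$, the induction, and the non-strong case all work. With all of $M(v)$ put into $S$, your cut is exactly the paper's. The sink side is $N^+(v)\cup N^{++}(v)$, where $N^{++}(v)$ (your $X^+$) is the set of vertices reachable from $v$ by a directed $2$-path. The deleted edges are those from $N^{++}(v)$ to non-neighbours of $v$.

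The gap is that the whole content of the theorem is the existence of a cut with $e(T,S)\le\tfrac12\gamma(S,T)$, and this is not proved. The charging ``with multiplicity two'' and the averaging are only hopes; you never say what is counted or why the counts balance. The choice of $v$ is essential, and it is missing. Your remark that a single vertex naturally gives $e(T,S)\le\gamma(S,T)$ is false for this cut. Take the bipartite $4$-free digraph with edges $v\to a$, $a\to p_i$, $p_i\to c_j$, $c_j\to q$, $q\to b$, $b\to v$ for all $i,j\in[k]$. The cut at $v$ (with $M(v)\subseteq S$) has $k^2$ backward edges but only $k+1$ crossing nonadjacent pairs. So ``summing $e(T_v,S_v)\le\gamma(S_v,T_v)$ over $v$'' starts from an inequality that need not hold. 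What one averages must be quantities whose totals over all $v$ agree.

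The missing idea, which is the paper's, is to choose $v$ by double counting directed paths $v_1v_2v_3v_4$ with $v_1\nsim v_4$, according to the position of $v$. These are exactly the induced ones, and $\{v_1,v_4\}$ is a pair counted by $\gamma$. A path with $v_1=v$ ends with a backward edge of your cut. A path with $v_2=v$ (resp.\ $v_3=v$) has its end pair in $N^-(v)\times N^{++}(v)$ (resp.\ $N^{--}(v)\times N^+(v)$), where $N^{--}(v)$ is the set of vertices with a directed $2$-path to $v$. Since $4$-freeness gives $N^{++}(v)\cap N^{--}(v)=\emptyset$, both pair families cross the cut. So the second nonadjacency comes from these two families, not from where you place $M(v)$.

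The paper takes $v$ with $f(v)\le\tfrac12(g(v)+h(v))$ and compares these raw path counts with edge and pair counts. This is legitimate for backward edges, since each lies on at least one such path, but not for pairs. A pair $\{a,c\}\in N^-(v)\times N^{++}(v)$ is the end pair of $\mu(v,c)$ paths, where $\mu(u,w)$ is the number of directed $2$-paths from $u$ to $w$. If every vertex of a directed $6$-cycle is replaced by $m\ge 2$ independent copies, then $g(v)+h(v)=2m^3>3m^2=\gamma(G)$.

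A correct version weights each path by $1/\mu(v_1,v_3)$ when counting positions $1$ and $3$, and by $1/\mu(v_2,v_4)$ when counting positions $2$ and $4$. The weighted counts at $v$ are then exactly:
\begin{itemize}
\item position $1$: $e(T_v,S_v)$;
\item position $3$: the number of nonadjacent pairs in $N^{--}(v)\times N^+(v)$;
\item position $4$: the number of backward edges of the reversed cut with $S=N^-(v)\cup N^{--}(v)$;
\item position $2$: the number of nonadjacent pairs in $N^-(v)\times N^{++}(v)$.
\end{itemize}
Positions $1$ and $3$ have the same weighted total over all $v$, as do positions $2$ and $4$. Hence some $v$ has its two backward-edge counts summing to at most its two pair counts. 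Both pair families cross both cuts at $v$, so the better cut satisfies $e(T,S)\le\tfrac12\gamma(S,T)$. For strong $G$ both cuts are nontrivial, and your induction then finishes the proof.
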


Theorem~\ref{1/2} may be viewed as a bipartite counterpart of the Chudnovsky--Seymour--Sullivan estimate. The proof is inductive. Its main step selects a vertex by double-counting induced directed paths of length three and then decomposes the digraph according to the in- and out-neighborhoods of that vertex. The absence of directed \(4\)-cycles is used to separate the resulting parts and to compare the edges that must be deleted with the cross-part nonadjacencies that are created.

The second part of the paper treats the dense extremal case. Since dense bipartite digraphs have few cross-part nonadjacent pairs, they are a natural class in which to test the strength of inequalities relating \(\beta(G)\) and \(\gamma(G)\). We therefore consider a directed Tur\'an problem for \(2k\)-free strong bipartite digraphs. Given a digraph \(G\) and a family of digraphs \(\mathcal{F}\), we say that \(G\) is \emph{\(\mathcal{F}\)-free} if it contains no subdigraph isomorphic to any member of \(\mathcal{F}\). With the relevant vertex set fixed, the \emph{Tur\'an number} of \(\mathcal{F}\) is the maximum number of edges in an \(\mathcal{F}\)-free digraph, and extremal digraphs attaining this maximum are called \emph{Tur\'an digraphs}.

A classical directed Tur\'an-type problem of Thomassen (see \cite{1}, Chapter \(8\)) asks for the least number \(m(n,k)\) such that every strong digraph on \(n\) vertices with at least \(m(n,k)\) edges contains a directed cycle of length at most \(k\). Bermond, Germa, Heydemann and Sotteau \cite{2} completely settled this problem. Equivalently, they proved the following theorem.

 \begin{thm} \textnormal{(\cite{2})}\label{turank}
The Tur\'{a}n number of $k$-free strong digraphs on $n$ vertices is $\binom{n-k+2}{2}+k-2$. 
\end{thm}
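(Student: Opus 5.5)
We prove the two inequalities separately: an explicit construction for the lower bound, and induction on $k$ via edge contraction for the upper bound. Throughout put $m=n-k+2$. Since $k$-free forbids directed $2$-cycles, every $k$-free digraph with $k\ge 2$ is an oriented graph.

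\emph{Lower bound.} Take a transitive tournament on vertices $u_1,\dots,u_m$ with $u_i\to u_j$ for $i<j$. Delete the single edge $u_1u_m$. Then add new vertices $w_1,\dots,w_{k-2}$ and the path $u_m\to w_1\to\cdots\to w_{k-2}\to u_1$.
\begin{itemize}
\item Strong: every $u_i$ is reached from $u_1$ and reaches $u_m$, and the path closes the loop through the $w_j$.
\item Edge count: $\binom{m}{2}-1+(k-1)=\binom{m}{2}+k-2$.
\item $k$-free: the tournament part is acyclic, so every cycle uses the whole path from $u_m$ to $u_1$, which has $k-1$ edges. It also needs a walk $u_1\leadsto u_m$ inside the tournament, of length at least $2$ because $u_1u_m$ was removed. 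So every cycle has length at least $k+1$.
\end{itemize}

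\emph{Upper bound.} We induct on $k$, with $n-k$ (and hence $m$) held fixed.
\begin{itemize}
\item Base case $k=2$: a $2$-free digraph is oriented, so it has at most $\binom{n}{2}=\binom{m}{2}+k-2$ edges.
\item Inductive step, setup: let $G$ be strong and $k$-free with $k\ge 3$, and let $C$ be a shortest cycle, of length $\ell\ge k+1$. Pick an edge $uv$ of $C$ and contract it to a vertex $z$, obtaining $G'$ on $n-1$ vertices.
\item Properties of $G'$: it is strong, since contraction preserves reachability. Every cycle of $G'$ lifts to a cycle of $G$ at most one edge longer, so $G'$ is $(k-1)$-free; in particular a $2$-cycle would lift to a $3$-cycle, which is excluded. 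Also $(n-1)-(k-1)+2=m$, so the induction hypothesis gives $e(G')\le\binom{m}{2}+k-3$.
\item Conclusion: if the contraction loses only the edge $uv$, then $e(G)\le e(G')+1$ and we are done.
\end{itemize}

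The contraction loses more than one edge exactly when some vertex $w$ has parallel edges that get merged: $w\to u$ and $w\to v$, or $u\to w$ and $v\to w$, or the transitive pattern $u\to w\to v$ (whose two edges become a $2$-cycle at $z$). This is the main obstacle.

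\emph{Handling the obstacle.} First try to choose the edge $uv$ of $C$ with no such common neighbour, using the minimality of $C$. A vertex $w$ in one of these patterns with respect to consecutive edges of $C$ yields chords or shortcuts. If every edge of $C$ had such a $w$, I would try to derive either a cycle shorter than $\ell$ or a vertex $w$ whose adjacencies to $C$ are very constrained, and then count. If no good edge exists, fall back on a direct count. A vertex outside a shortest cycle of length $\ell$ can send edges into $C$ and receive edges from $C$ only in patterns compatible with $\ell$ being minimal. This should show that the vertices of $C$ are incident with few edges. One can then delete an internal vertex of a suitable path (an ear) and apply induction on $n$ with the same $k$, bounding the degree of the deleted vertex by $n-k+1=m-1$.

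I expect this combination, namely choosing a contractible edge or otherwise bounding degrees near a shortest cycle, to be the delicate step. The construction and the induction bookkeeping are routine.
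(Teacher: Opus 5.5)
Your lower-bound construction is correct; it is the standard extremal digraph. The base case $k=2$ is also fine. Note that the paper does not prove this theorem: it quotes it from Bermond, Germa, Heydemann and Sotteau. So the comparison is with the standard argument, which the paper adapts in Section~4.1 for Theorem~\ref{turan2k}. Your upper bound has a genuine gap at its first step.

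\textbf{The contraction step fails.} You claim every cycle of $G'$ lifts to a cycle of $G$ at most one edge longer. This is false. A cycle of $G'$ through $z$ may enter $z$ along $a\to v$ and leave along $u\to b$. In $G$ this is a $u$--$v$ path, and it cannot be closed because $vu\notin E(G)$. Hence every $u$--$v$ path of length $j\in[2,k-1]$ other than the edge $uv$ becomes a directed $j$-cycle in $G'$.
\begin{itemize}
\item For $j=2$ you listed the pattern, but filed it as an edge-counting issue. It actually destroys $2$-freeness, which contradicts your own ``in particular'' sentence.
\item For $j\ge 3$ no edge is merged at all, yet $G'$ still fails to be $(k-1)$-free.
\end{itemize}

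A suitable edge need not exist on a shortest cycle. Take $C=\langle c_0,\dots,c_k\rangle$ and add vertices $w_i$ with $c_i\to w_i\to c_{i+1}$ (indices mod $k+1$). Every cycle goes once around, so its length lies between $k+1$ and $2k+2$. The digraph is therefore strong and $k$-free, with $C$ a shortest cycle. Yet contracting any edge $c_ic_{i+1}$ creates the $2$-cycle $z\leftrightarrow w_i$. If each $w_i$ is replaced by a path $w_iw_i'$, contraction creates the $3$-cycle $zw_iw_i'$ with no merged edges, which already breaks $(k-1)$-freeness for $k\ge 4$. So your ``Handling the obstacle'' paragraph is a hope rather than an argument, and the induction on $k$ does not go through.

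\textbf{The fallback needs more.} Deleting an ear vertex and inducting on $n$ with $k$ fixed is the right idea. As stated, however, it only works when the ear is a single vertex. If the ear has $r\ge 2$ vertices, deleting one leaves a non-strong digraph, so the induction hypothesis does not apply. The standard proof, following the same scheme as the paper's proof of Theorem~\ref{turan2k}, goes as follows.
\begin{itemize}
\item Let $H$ be a proper strong induced subdigraph of maximum order $h$; then $h\ge k+1$ unless $G$ is a cycle.
\item Maximality forces $V(G)\setminus V(H)$ to be a single ear $q_1\cdots q_r$. Its internal vertices have no neighbours in $H$, there are no forward chords, and for $r\ge 2$ we have $N^+_H(q_1)=N^-_H(q_r)=\emptyset$.
\item If $r=1$, take a shortest path from $N^+(q_1)$ to $N^-(q_1)$. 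It has at least $k-2$ internal vertices, none adjacent to $q_1$. So $d(q_1)\le n-k+1$ and induction closes.
\item If $r\ge 2$, take a shortest path $p_0\cdots p_t$ in $H$ from $N^+_H(q_r)$ to $N^-_H(q_1)$. Then $t\ge\max\{0,k-r\}$. The vertices $p_0,\dots,p_{t-1}$ are non-neighbours of $q_1$, and $p_1,\dots,p_t$ are non-neighbours of $q_r$.
\item A backward chord $q_j\to q_i$ requires $j-i\ge k$, so there are at most $\binom{r-k+1}{2}$ of them (read as $0$ if $r\le k$).
\end{itemize}
Hence
\[
e(G)\le \binom{h-k+2}{2}+k-2+2h-2\max\{0,k-r\}+(r-1)+\binom{r-k+1}{2}.
\]
Using $h\ge k+1$ and $n=h+r$, a direct computation shows this is at most $\binom{n-k+2}{2}+k-2$.
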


Our second main result is an exact bipartite analogue with prescribed part sizes.

 \begin{thm}\label{turan2k}
The Tur\'an number of $2k$-free strong bipartite digraphs with partite sets
$X$ and $Y$, where $\min\{|X|,|Y|\}\ge k+1$, is
$$(|X|-(k-1))(|Y|-(k-1))+2k-2.$$
\end{thm}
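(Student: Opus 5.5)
The plan has two parts: a construction showing the bound is attained, and an upper bound proved by induction on $k$ via ear decompositions.

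\textbf{Construction.} Let $|X|=a$, $|Y|=b$. Write $X=X_0\cup\{x_1,\dots,x_{k-1}\}$ and $Y=Y_0\cup\{y_1,\dots,y_{k-1}\}$, where $|X_0|=a-k+1\ge 2$ and $|Y_0|=b-k+1\ge 2$. Fix $x_0\in X_0$ and $y_0\in Y_0$. Between $X_0$ and $Y_0$ put a strong bipartite tournament with no directed cycle of length at most $2k$; this should be nearly transitive. Then add a long path through the extra vertices, closing a cycle of length exactly $2k+2$ or longer, so that the digraph becomes strong. A concrete choice: orient every edge between $X_0$ and $Y_0$ from $X_0$ to $Y_0$, except the single edge $y_0\to x_0$. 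Then add the path $y'\to x_1\to y_1\to\cdots\to x_{k-1}\to y_{k-1}\to x'$ with $y'\in Y_0$ and $x'\in X_0$. This uses $2k-1$ edges, so the total is $(a-k+1)(b-k+1)+2k-2$ provided one of the core edges is not counted or is removed; the endpoints must be tuned so that the count comes out exactly right. We then verify strong connectivity and that every directed cycle either uses the reversed edge (length at least $2k+2$ after routing) or the path (length at least $2k+2$). Some care is needed to choose which single core edge is reversed and where the path attaches, so that no short cycle arises.

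\textbf{Upper bound.} We argue by induction on $k$, mirroring the proof of Theorem~\ref{turank}. Take a strong $2k$-free bipartite $G$ and an ear decomposition. Since $G$ has no $2$-cycles, $G$ has a shortest cycle $C$, of length $2\ell\ge 2k+2$. The key step is a counting lemma: each vertex outside $C$ has at most $\ell$ neighbours, counted appropriately, on $C$ without creating a cycle of length at most $2k$. Similarly, chords of $C$ are restricted, since a chord creates a shorter cycle.

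Alternatively, and more cleanly, we can contract a vertex pair. Find an edge $uv$ or an induced path of length $2$ such that contracting it, or deleting a vertex of in-degree or out-degree $1$, keeps the digraph strong and reduces the problem to parameters $(a-1,b-1,k-1)$. We then show that at most $(a-k+1)+(b-k+1)-1+2$ edges are lost, so that
\[
(a-1-k+2)(b-1-k+2)+2k-4+(a+b-2k+3)-\ldots
\]
matches the target. The base case $k=1$ is trivial: the bound is $ab$, attained by a complete bipartite digraph that is oriented strongly with no $2$-cycle.

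The main obstacle is the reduction step. We must find a suitable vertex or edge to contract so that the result is bipartite, still strong, and $2(k-1)$-free, and we must bound the number of edges lost. Contracting an edge $xy$ on a shortest cycle merges vertices from different parts, which breaks bipartiteness. So the first thing to try is to contract a directed path $x\to y\to x'$ of length two into a single vertex of $X$, which shortens every cycle through it by $2$ and reduces both $|X|$ and $|Y|$ by one. Cycles not through the path must already have length at least $2k+2$, but contraction can create short cycles; ruling this out requires choosing the path on a shortest cycle and using the minimality of $C$. Finally, we bound the number of lost edges using the fact that $y$ and $x$ together cannot have many common neighbours without creating $4$-cycles.
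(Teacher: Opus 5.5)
Your proposal has genuine gaps in both halves: the concrete construction fails, and the reduction step for the upper bound is not available in general.

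\textbf{Construction.} Your concrete digraph has $(a-k+1)(b-k+1)+2k-1$ edges, one more than the claimed maximum, and it does fail.
\begin{itemize}
\item Unless $(x',y')=(x_0,y_0)$, the core edge $x'\to y'$ closes your path into the directed $2k$-cycle $y'x_1y_1\cdots x_{k-1}y_{k-1}x'$.
\item If $(x',y')=(x_0,y_0)$, the vertices of $Y_0\setminus\{y_0\}$ are sinks, so the digraph is not strong.
\item Your first suggestion, a strong $2k$-free bipartite tournament on $X_0\cup Y_0$, cannot exist for $k\ge 2$. By Lemma~\ref{lemma2}, every vertex has at least $k-1\ge 1$ non-neighbours on the other side.
\end{itemize}
The missing idea is that the two ends of the ear must be \emph{nonadjacent} in the core. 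The core must then be a layered acyclic digraph that leads from the ear's terminal vertex back to its initial vertex only by paths of length $3$. The paper does this by blowing up two consecutive vertices $x_0,y_0$ of a $(2k+2)$-cycle into $X^*\to Y^*$, with $Y^*\to x_1$ and $y_k\to X^*$.

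\textbf{Upper bound.} Your ear-decomposition ``counting lemma'' is never stated precisely, and the contraction route fails as formulated.
\begin{itemize}
\item Write $f(a,b,k)=(a-k+1)(b-k+1)+2k-2$. Then $f(a,b,k)-f(a-1,b-1,k-1)=2$ exactly, so the contraction may destroy at most two edges. Your displayed loss of $a+b-2k+3$ is inconsistent with this.
\item Contracting $x\to y\to x'$ into one $X$-vertex $z$ keeps the digraph bipartite only if $N(y)=\{x,x'\}$.
\item Every further common neighbour $w$ of $x$ and $x'$ either merges two edges, or, if $x\to w\to x'$, creates a $2$-cycle at $z$.
\item Every directed $x$--$x'$ path of length at most $2k-2$ avoiding $y$ becomes a forbidden cycle. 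These paths are not cycles of $G$, so choosing $x,y,x'$ on a shortest cycle does not exclude them.
\item Most importantly, a strong $2k$-free bipartite digraph need not contain any vertex of degree $2$. Blowing up every vertex of $C_{2k+2}$ into two independent copies gives one in which every vertex has degree $4$. So the reduction is unavailable in general, and your plan has no argument bounding the edges of such digraphs.
\end{itemize}

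For comparison, the paper fixes $k$ and inducts on $|X|+|Y|$. It takes a maximum proper strong subdigraph $G^*$; by maximality the remaining vertices form a single ear $Q$ whose internal vertices have no neighbours in $G^*$. It then splits on the size of the ear.
\begin{itemize}
\item If $|V(Q)|=1$, Lemma~\ref{lemma2} gives at least $k-1$ non-neighbours of the extra vertex.
\item If $|V(Q)|=2$, the induced cycle of length at least $2k+2$ through $Q$ gives $d(u)+d(v)\le |X|+|Y|-2k+2$.
\item If $|V(Q)|\ge 3$, counting the nonadjacencies between the interior of $Q$ and $G^*$ gives enough missing pairs.
\end{itemize}
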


For \(k=2\), Theorem~\ref{turan2k} gives the maximum edge number \((|X|-1)(|Y|-1)+2\). Thus a \(4\)-free strong bipartite Tur\'an digraph has \(\gamma(G)=|X|+|Y|-3\). We further analyze the structure of these extremal digraphs and obtain a sharper feedback-edge estimate on this dense class.

\begin{thm}\label{C_{2k}}
	Let $G$ be any $4$-free strong bipartite Tur\'{a}n digraph. Then $\beta(G) \le \frac{1}{3}\gamma(G)$.
\end{thm}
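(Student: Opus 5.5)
The plan has two stages: first pin down the structure of the extremal digraphs, then exhibit a small feedback edge set. By Theorem~\ref{turan2k} with $k=2$, $G$ has exactly $(|X|-1)(|Y|-1)+2$ edges, so $\gamma(G)=|X|+|Y|-3$. The target is therefore
\[
\beta(G)\le \tfrac13\bigl(|X|+|Y|-3\bigr).
\]

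\emph{Structure.} Since $G$ is $4$-free, it has no $2$-cycles, so each adjacent cross pair carries exactly one arc. Thus $G$ is an orientation of a bipartite graph that misses exactly $|X|+|Y|-3$ cross pairs.

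I would re-run the upper-bound argument of Theorem~\ref{turan2k} and track when equality holds. The natural guess is that an extremal digraph looks like the balanced three-block example mentioned in the abstract, in a general form:
\begin{itemize}
\item $X$ and $Y$ are each split into ordered blocks.
\item Arcs between blocks go "forward", giving an acyclic core.
\item A small number of backward arcs make $G$ strong.
\item Every missing pair comes from a backward arc that kills a would-be $4$-cycle.
\end{itemize}

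Concretely, I would first show that $G$ has a vertex $v$ all of whose cross-part nonadjacencies can be charged to it, or that contracting or deleting a suitable pair yields a smaller extremal digraph. Induction on $|X|+|Y|$ would then give a decomposition into a directed "cyclic chain" of blocks $B_1,\dots,B_t$, where consecutive blocks are complete bipartite forward and the chain closes through a few vertices whose nonadjacencies account for all of $\gamma(G)$.

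\emph{Feedback set.} Given such a chain, every directed cycle must pass through the closing arcs. Deleting the arcs of the block or vertex set that is cheapest among three natural cut positions gives an acyclic digraph. The three possible cuts are disjoint, and their costs sum to at most the number of missing pairs, roughly because each missing pair is "paid for" by one of the three cut locations. So the cheapest cut has cost at most $\gamma(G)/3$. This averaging over three disjoint cuts mirrors why the balanced three-block construction attains equality.

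\emph{Main obstacle.} The hard step is the structural characterization. Theorem~\ref{turan2k} as stated only gives the count, not the shape of the extremal digraphs, so I would need to redo the equality case carefully. I would first try induction by deleting a vertex of minimum degree $d$. For an extremal $G$, the removed vertex contributes at most $|Y|-d$ nonadjacencies, and the remainder must still be strong and extremal, or nearly so; otherwise the edge count would exceed the Turán bound from Theorem~\ref{turan2k}. If the remainder fails to be strong, I would instead decompose along its strong components, which in this dense setting should be forced to form the cyclic block chain described above.
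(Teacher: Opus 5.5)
There is a genuine gap. The whole argument rests on a structural characterization that you neither prove nor state precisely, and the shape you guess is modelled on the base families rather than on general extremal digraphs.

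Your three-cut averaging is exactly right when $|X|=3$. Then $G\in\mathcal{F}_2$, and the arc classes from $x_i$ to $Y_i$ ($i=1,2,3$) are disjoint feedback sets of total size $|Y|=\gamma(G)$; this is the paper's base case. For larger $|X|$, however, Tur\'an digraphs arise by recursive gluing. For example:
let $H_1\in\mathcal{F}_2$ have $X$-vertices $a_1,a_2,a_3$ and blocks $\{y,u\},\{u'\},\{u''\}$;
let $H_2\in\mathcal{F}_2$ have $X$-vertices $b_1,b_2,b_3$ and blocks $\{y,w\},\{w'\},\{w''\}$, sharing the vertex $y$ with $H_1$;
add every arc from $\{a_1,a_2,a_3\}$ to $\{w,w',w''\}$ and from $\{u,u',u''\}$ to $\{b_1,b_2,b_3\}$, except $a_2w''$ and $u'b_1$.
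This digraph is $4$-free and strong with $32=(6-1)(7-1)+2$ arcs, so it is Tur\'an. Its directed cycles are those inside $H_1-y$, those inside $H_2-y$ (two vertex-disjoint pieces, each containing a $6$-cycle), and those through $y$. A feedback set has to be assembled piece by piece. Your proposal gives no mechanism for producing three disjoint feedback sets of total size at most $\gamma(G)$ for such compositions. Nor does it handle the other configurations that occur, such as $G-y$ having three or more strong components, or $G-y$ being acyclic.

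Your inductive heuristic also points the wrong way. A non-strong remainder cannot ``exceed the Tur\'an bound'', since that bound concerns strong digraphs. In fact the remainder is \emph{never} strong. With $|Y|\ge|X|\ge4$ and $y\in Y$ of minimum degree, averaging gives $d_G(y)\le|X|-2$. So if $G-y$ were strong, then $|E(G)|\le(|X|-1)(|Y|-2)+2+(|X|-2)=(|X|-1)(|Y|-1)+1$, contradicting extremality.

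The missing idea is to induct on the inequality $\beta\le\gamma/3$ itself, instead of seeking an explicit global structure. The paper does this as follows.
It analyses the strong components $G_1,\dots,G_s$ of $G-y$.
It uses $4$-freeness to force specific missing cross pairs (such as $x_1^*y_2^*$ and $y_1^*x_2^*$), so that edge counting excludes most configurations.
In the surviving configurations it shows that the pieces $G_1$, $G_1\cup\{y\}$, $G_s$, $G_s\cup\{y\}$ are themselves smaller Tur\'an digraphs.
Then, for example, $\beta(G)\le\beta(G_1\cup\{y\})+\beta(G_2)+1$. The single extra arc $x_2^*y$ is paid for because $\gamma(G)$ exceeds $\gamma(G_1\cup\{y\})+\gamma(G_2)$ by exactly $3$. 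The other surviving cases delete one or two extra arcs against a surplus of at least $3$ per arc. When $G-y$ is acyclic, one deletes the smaller of $y$'s in-arc and out-arc sets, which has size at most $(|X|-2)/2$.
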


The coefficient \(1/3\) is best possible within this extremal class: equality is attained by balanced members of the three-block family described in Section~\ref{sec:proof C_4}. Thus Theorem~\ref{1/2} gives a general bipartite feedback-edge bound, while Theorem~\ref{C_{2k}} verifies the sharper coefficient for the densest strong \(4\)-free bipartite digraphs.

The rest of this paper is organized as follows. Section~2 collects notation and preliminary observations. Section~3 proves the general bound in Theorem~\ref{1/2}. Section~4 determines the bipartite Tur\'an number in Theorem~\ref{turan2k}. Section~5 proves Theorem~\ref{C_{2k}} by first describing the relevant \(4\)-free strong bipartite Tur\'an digraphs and then applying that structure to feedback edge sets. Section~6 contains concluding remarks.

		\section{Preliminaries}

 For integers $m$ and $n$, write $[m,n]=\{m, m+1, \ldots, n\}$ and $[n]=[1,n]$. Let $G=(V,E)$ be a digraph. We write $(x,y)$ or $xy$ for an edge from $x$ to $y$, and sometimes we will also write $x\rightarrow y$ for this edge and say $x$ dominates $y$. Two distinct vertices are \emph{adjacent} if there is an edge connecting them, otherwise they are nonadjacent. We use $x\sim y$ to mean that $x$ and $y$ are adjacent, and $x\nsim y$ otherwise. The \emph{out-neighborhood} of $x$ is the set $N^{+}_{G}(x)=\{y\in V(G):xy\in E(G)\}$, and its \emph{out-degree} is $d^{+}_{G}(x)=|N^{+}_{G}(x)|$. The \emph{in-neighborhood} $N^{-}_{G}(x)$ and the \emph{in-degree} $d^{-}_{G}(x)$ are defined analogously. The \emph{neighborhood} of $x$ is the set $N_{G}(x)=N^{+}_{G}(x)\cup N^{-}_{G}(x)$ and its \emph{degree} is $d_{G}(x)=d^{+}_{G}(x)+d^{-}_{G}(x)$. The vertices in $N^{+}_{G}(x),N^{-}_{G}(x)$ and $N_{G}(x)$ are called the \emph{out-neighbors}, \emph{in-neighbors} and \emph{neighbors} of $x$, respectively. We denote by $\Delta^{+}(G)$, $\Delta^{-}(G)$ and $\Delta(G)$ the \emph{maximum out-degree}, \emph{maximum in-degree}) and \emph{maximum degree} of $G$, respectively. Also, we use $\delta^{+}(G)$, $\delta^{-}(G)$ and $\delta(G)$  to denote the corresponding minimum ones.

A \emph{directed path} of $G$ is a sequence of distinct vertices $x_{1},x_{2},\ldots,x_{\ell}$ with $x_ix_{i+1}\in E(G)$ for any $i\in[\ell-1]$. The vertices $x_{1}$ and $x_\ell$ are called the \emph{initial} and \emph{terminal} vertices, and the directed path is denoted by $x_1x_2\ldots x_\ell$. Let $P$ and $Q$ be two vertex disjoint directed paths such that the initial vertex of $Q$ is dominated by the terminal vertex of $P$. Write $P\circ Q$ for the concatenation of $P$ and $Q$, i.e., the directed path obtained by first passing through every vertex of $P$ and then those of $Q$. In particular, when $P$\;(resp., $Q$) consists of an edge $(x,y)$, we shall write $(x,y)\circ Q$\;(resp., $P \circ (x,y)$) for $P\circ Q$, and similarly when $P$\;(resp., $Q$) consists of a single vertex $x$, we shall write $x\circ Q$\;(resp., $P \circ x$) for $P\circ Q$. A \emph{directed cycle} of $G$ is a sequence of distinct vertices $y_{1},y_{2},\ldots,y_{h}$ with $y_{i}y_{i+1}\in E(G)$ for any $i\in [h-1]$ and $y_{h}y_{1}\in E(G)$; we denote it by $\langle y_{1}, y_{2},\ldots,y_{h}\rangle$ and write $|C|$ for its length. A digraph is \emph{strongly connected}\;(\emph{strong} for short) if every vertex can be reached from every other vertex via a directed path. A maximal strong subdigraph of $G$ is called a \emph{strong component} of $G$. If $G$ has $s\geq 2$ strong components, then we can arrange these $s$ strong components, denoted by $G_{1},G_2,\ldots,G_{s}$, such that there is no edge from $V(G_j)$ to $V(G_i)$, where $i,j\in [s]$ and $i<j$. In such an ordering, $G_{1}$ is an \emph{initial component} and $G_{s}$ is a \emph{terminal component}.

A digraph $G^*$ is a \emph{subdigraph} of $G$ if $V(G^*)\subseteq V(G)$ and $E(G^*)\subseteq E(G)$. We denote $N^{+}_{G^*}(x)=N^{+}_{G}(x)\cap V(G^*)$ and $d^{+}_{G^*}(x)=|N^{+}_{G^*}(x)|$; the notation $N^{-}_{G^*}(x),N_{G^*}(x),d^{-}_{G^*}(x)$ and $d_{G^*}(x)$ is defined similarly. For any $X\subseteq V(G)$, let $G[X]$ be the subdigraph induced by $X$, and we denote $G-X$ as the digraph $G[V(G)\backslash X]$. By $|X|$ we mean the cardinality of $X$. If $X,Y\subseteq V(G)$ are disjoint, then $E(X,Y)$ denotes the set of edges from $X$ to $Y$, and denote by $|E(X,Y)|$ the number of corresponding edges. Moreover, we say $X$ dominates $Y$, denoted by $X\rightarrow Y$, if every vertex in $X$ dominates every vertex in $Y$\;(we omit the set brackets if $X$\;(resp., $Y$) contains only one vertex). For any \(S \subseteq E(G)\), let \(G\setminus S\) be the digraph with vertex set \(V(G)\) and edge set \(E(G) \setminus S\).

We shall use the following elementary observations on strong digraphs.

\begin{obser}\label{observation1}
For any strong digraph $G$, every vertex \(v\in V(G)\) satisfies that $d^+_{G}(v)\geq 1$ and $d^-_{G}(v)\geq 1$.
\end{obser}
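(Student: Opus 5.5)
The statement is the elementary Observation~\ref{observation1}, and I would prove it directly from the definition of a strong digraph, arguing by contradiction. The only subtlety is the degenerate case of a one-vertex digraph. For it the claim is false, since \(d^+(v)=d^-(v)=0\). So I would read the observation as concerning strong digraphs with at least two vertices, which is the only setting in which it is used later (strong bipartite digraphs with both parts nonempty automatically have at least two vertices).

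Fix \(v\in V(G)\) and, since \(|V(G)|\ge 2\), pick another vertex \(u\neq v\).

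\emph{Out-degree.} Because \(G\) is strong, there is a directed path \(P=x_1x_2\ldots x_\ell\) with \(x_1=v\) and \(x_\ell=u\). As \(u\neq v\), we have \(\ell\ge 2\). Hence \(vx_2\in E(G)\), so \(x_2\in N^+_G(v)\) and \(d^+_G(v)\ge 1\).

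\emph{In-degree.} Symmetrically, take a directed path from \(u\) to \(v\). Its second-to-last vertex is an in-neighbour of \(v\), so \(d^-_G(v)\ge 1\).

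There is no real obstacle here. The one point needing care is the trivial-digraph convention, which I would state explicitly as an assumption (\(|V(G)|\ge 2\)) rather than leave implicit.
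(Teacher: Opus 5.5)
Your proof is correct and is the routine definition-chasing argument the paper leaves implicit, since it states this observation without proof as ``elementary.'' Your caveat about the one-vertex digraph is also needed. Under the paper's definition a single vertex is vacuously strong, and the paper later speaks of singleton strong components. The observation is only ever invoked for strong digraphs on at least two vertices, so restricting to $|V(G)|\ge 2$ is the right reading.
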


\begin{obser}\label{observation2}
Let $G^*$ be a strong digraph. If $v$ has both out-neighbors and in-neighbors in $G^*$, then $G[V(G^*)\cup \{v\}]$ is strong.
\end{obser}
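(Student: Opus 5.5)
The plan is to verify strong connectivity of $H=G[V(G^*)\cup\{v\}]$ directly from the definition, by producing a directed path in $H$ between every ordered pair of its vertices. We may assume $v\notin V(G^*)$, since otherwise $H$ contains $G^*$ as a spanning subdigraph and is strong at once. By hypothesis, fix a vertex $u\in N^{+}_{G^*}(v)$ and a vertex $w\in N^{-}_{G^*}(v)$; thus $vu$ and $wv$ are edges of $G$ with $u,w\in V(G^*)$.

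First, $G^*$ is a subdigraph of $G$, so every edge of $G^*$ joins two vertices of $V(G^*)$ and is therefore an edge of the induced subdigraph $H$. Hence $G^*$ is a subdigraph of $H$. In particular, for any $x,y\in V(G^*)$, a directed $x$--$y$ path in $G^*$ (which exists because $G^*$ is strong) is also a directed path in $H$.

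It then remains to handle pairs involving $v$.
\begin{itemize}
\item For $x\in V(G^*)$, take a directed $x$--$w$ path $P$ in $G^*$ (trivial if $x=w$). Then $P\circ v$ is a directed $x$--$v$ path in $H$, because $wv\in E(H)$ and $v\notin V(P)$.
\item Symmetrically, for $y\in V(G^*)$, take a directed $u$--$y$ path $Q$ in $G^*$. Then $v\circ Q$ is a directed $v$--$y$ path in $H$.
\end{itemize}
Every ordered pair of distinct vertices of $H$ is thus joined by a directed path, so $H$ is strong.

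There is no genuine obstacle here. The only points needing care are that $G^*$'s edges survive in the induced subdigraph, and that the concatenations are legitimate paths. The latter holds because $v$ lies outside $V(G^*)$, so appending $v$ never repeats a vertex.
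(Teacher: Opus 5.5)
Your proof is correct. The paper states this observation as elementary and gives no proof; your argument is the standard check it relies on implicitly. Edges of $G^*$ survive in the induced subdigraph, and the pairs involving $v$ are handled by routing through a fixed in-neighbor $w$ and out-neighbor $u$ of $v$, with the case $v\in V(G^*)$ treated separately.
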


We next record two preliminary lemmas. Recall that every directed cycle in a bipartite digraph has even length.

\begin{lem}\label{lemma1}
Let $G$ be a $2k$-free strong bipartite digraph with partite sets $X$ and $Y$. Then $|X|\geq k+1$ and $|Y|\geq k+1$. Moreover, if $k=1$, then $|E(G)|\leq |X||Y|$ holds.
\end{lem}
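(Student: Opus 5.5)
We prove the two assertions separately. Throughout, $G$ is strong and bipartite, so every directed cycle alternates between $X$ and $Y$ and has even length.

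\emph{Part sizes.} Since $G$ is strong, it contains at least one directed cycle $C$; in fact it has at least two vertices and an edge $uv$, and the $v$--$u$ path closes a cycle. Because $G$ is $2k$-free, $|C|\ge 2k+2$. As $C$ alternates between the partite sets, it uses exactly $|C|/2\ge k+1$ vertices of $X$ and the same number of vertices of $Y$. Hence $|X|\ge k+1$ and $|Y|\ge k+1$. The only point to verify is that a cycle exists, which strong connectivity guarantees once $|V(G)|\ge 2$. We regard the trivial one-vertex digraph as excluded, or as having a partite set that is empty by convention; note that a single vertex would violate the conclusion, so this convention must be the intended one.

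\emph{The case $k=1$.} Here $2$-freeness means there are no opposite edges. Thus each pair $\{x,y\}$ with $x\in X$ and $y\in Y$ carries at most one edge. Since all edges of a bipartite digraph join $X$ and $Y$, this gives $|E(G)|\le |X||Y|$. This step is immediate; strong connectivity is not even needed.

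Neither step is a real obstacle. The only subtlety is making sure a directed cycle exists, and it does for any strong digraph with at least two vertices.
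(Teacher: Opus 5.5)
Your proposal is correct and follows essentially the same approach as the paper: a directed cycle of length at least $2k+2$ that alternates between the partite sets gives the size bounds, and the absence of opposite edges gives $|E(G)|\le |X||Y|$ when $k=1$. Your remark that a directed cycle exists only when $|V(G)|\ge 2$ correctly points out the trivial one-vertex case, which the paper passes over.
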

\noindent\textbf{Proof.}
We first show the former statement. Let $C$ be a shortest directed cycle of $G$, which exists because $G$ is strong. Note that every directed cycle has even length at least $2k+2$ as $G$ is bipartite and $2k$-free. Hence $|C|\geq 2k+2$, and $C$ contains the same number of vertices in $X$ and in $Y$. This implies that $|X|\geq k+1$ and $|Y|\geq k+1$. Then we turn to the latter statement. If $k=1$, then $G$ contains no directed cycle of length two, so there is at most one edge between any $x\in X$ and $y\in Y$, which indicates that $|E(G)|\leq |X||Y|$.      
  \hfill $\blacksquare$

\begin{lem}\label{lemma2}
Let $G$ be a $2k$-free strong bipartite digraph with partite sets $X$ and $Y$. Then we have

$(1).$ for any vertex $x\in X$, there are at least $k-1$ many $y\in Y$ such that $x\nsim y$;

$(2).$ for any vertex $y\in Y$, there are at least $k-1$ many $x\in X$ such that $x\nsim y$.
\end{lem}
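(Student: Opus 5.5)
By the symmetry between $X$ and $Y$ it suffices to prove (1); statement (2) follows by exchanging the roles of the two partite sets. Fix $x\in X$. The idea is to use a shortest directed cycle through $x$. Every vertex of $Y$ on that cycle, except the two neighbours of $x$ on it, should be nonadjacent to $x$, since an edge would create a short-cut and hence a shorter cycle through $x$.

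Since $G$ is strong, $x$ lies on some directed cycle. Let $C=\langle x=v_0,v_1,\ldots,v_{m-1}\rangle$ be a shortest directed cycle through $x$. Because $G$ is bipartite and $2k$-free, $m$ is even and $m\ge 2k+2$. The vertices of $C$ in $Y$ are $v_1,v_3,\ldots,v_{m-1}$, so there are $m/2\ge k+1$ of them.

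The key claim is that $x\nsim v_j$ for every odd $j$ with $3\le j\le m-3$. Suppose instead that $x$ is adjacent to such a $v_j$.
\begin{itemize}
\item If $xv_j\in E(G)$, then $x v_j v_{j+1}\cdots v_{m-1}x$ is a directed cycle through $x$ of length $m-j+1<m$, because $j\ge 3$.
\item If $v_jx\in E(G)$, then $x v_1\cdots v_j x$ is a directed cycle through $x$ of length $j+1<m$, because $j\le m-3$.
\end{itemize}
Either case contradicts the minimality of $C$. Both shortcut cycles are genuine directed cycles, as their vertices are distinct.

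Finally we count. The odd indices $j$ with $3\le j\le m-3$ give $m/2-2\ge k-1$ vertices of $Y$ that are nonadjacent to $x$, which proves (1).

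The main subtle point is choosing the cycle to be shortest \emph{through $x$}, rather than a shortest cycle overall, so that the shortcut argument applies. Beyond that, the proof is only the index bookkeeping above.
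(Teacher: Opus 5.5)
Your proof is correct and is essentially the paper's argument. A shortest directed cycle through $x$ is exactly $x$ together with a shortest directed path from $N^+_G(x)$ to $N^-_G(x)$, which is what the paper uses, and in both versions the $Y$-vertices of that cycle other than the two neighbours of $x$ on it are shown to be nonadjacent to $x$. Your shortcut argument proves exactly the non-adjacency to $x$ that the count needs, and nothing more. The paper instead asserts that the cycle is induced, which minimality of the path does not quite guarantee, since backward chords between two vertices of the path are not excluded; but that claim is never used.
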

\noindent\textbf{Proof.}
We first prove $(1)$. For any vertex $x\in X$, we denote $Y_1=N^+_{G}(x)$ and $Y_2=N^-_{G}(x)$. Notice that $Y_1\cap Y_2$ is empty since $G$ is $2k$-free. In addition, as $G$ is strong, there must be a directed path whose initial vertex lies in $Y_1$ and whose terminal vertex lies in $Y_2$. Let $P$ be a shortest such path. Clearly, $G[V(P)\cup \{x\}]$ is an induced directed cycle of $G$ of length at least $2k+2$ satisfying that half of its vertices are in $X$ and the other are in $Y$. It follows that there are at least $k-1$ vertices $y\in Y$ such that $x\nsim y$. This proves $(1)$. By nearly identical argument one can also verify $(2)$.    \hfill $\blacksquare$

\section{Proof of Theorem~\ref{1/2}}

Let $G$ be any $4$-free bipartite digraph with bipartite sets $X$ and $Y$. We will prove Theorem~\ref{1/2} by induction on $|V(G)|=|X|+|Y|$, and we can assume that both $|X|$ and $|Y|$ are at least 2 since otherwise $G$ is acyclic, implying that $\beta(G)=0 \le \gamma(G)/2$, and we are done. Let \(f(v), g(v), h(v)\) and $i(v)$ denote the number of induced directed paths of length \(3\) where \(v\) acts as the first, second, third and last vertex, respectively. Clearly, $$\sum_{v \in V(G)} f(v) = \sum_{v \in V(G)} g(v) = \sum_{v \in V(G)} h(v) = \sum_{v \in V(G)} i(v).$$ Therefore, there exists a vertex \(v \in V(G)\) such that \(f(v) \le (g(v) + h(v))/2\). We may assume without loss of generality that \(v \in Y\). Let us denote \(X_1 = N^+_{G}(v)\), \(X_2 = N^-_{G}(v)\), and \(X_3 = X \setminus (X_1 \cup X_2)\). Moreover, we denote \(Y_1 = \{y \in Y: d^-_{G[X_1]}(y) \ge 1\}\), \(Y_2 = \{y \in Y: d^+_{G[X_2]}(y) \ge 1\}\) and \(Y_3 = Y \setminus (Y_1 \cup Y_2 \cup \{v\})\). It is evident that \(X = X_1 \cup X_2 \cup X_3\) and \(Y = Y_1 \cup Y_2 \cup Y_3 \cup \{v\}\), and we note that \(Y_1 \cap Y_2 = \emptyset\) because \(G\) is \(4\)-free. Let \(G_1\) denote the induced subdigraph of \(G\) on \(X_1 \cup Y_1\) and let \(G_2\) denote the subdigraph of \(G\) obtained from \(G\) by removing \(X_1 \cup Y_1 \cup \{v\}\).(see Figure~\ref{FIG-G-fig3})
					
					\begin{figure}[h]
						\centering
						\begin{tikzpicture}[line width=1pt,scale=0.50]
							\coordinate[label={[label distance=1mm]below:$v$}] (1) at  (-3.5,-3);
							\coordinate[] (2) at (-9.7,1.7);
							\coordinate[] (3) at (-8.5,1.7);
							\coordinate[] (4) at (-6,1.7);
							\coordinate[] (5) at (-0.7,1.7);
							\coordinate[] (6) at (0.5,1.7);
							\coordinate[] (7) at (3,1.7);
							\coordinate[] (8) at (7.3,1.7);
							\coordinate[] (9) at (8.5,1.7);
							\coordinate[] (10) at (11,1.7);
							\coordinate[] (11) at (-7.7,1.7);
							\coordinate[] (12) at (-7.3,1.7);
							\coordinate[] (13) at (-6.9,1.7);
							\coordinate[] (14) at (1.3,1.7);
							\coordinate[] (15) at (1.7,1.7);
							\coordinate[] (16) at (2.1,1.7);
							\coordinate[] (17) at (9.3,1.7);
							\coordinate[] (18) at (9.7,1.7);
							\coordinate[] (19) at (10.1,1.7);
							\coordinate[] (20) at (-9.7,-3);
							\coordinate[] (21) at (-8.5,-3);
							\coordinate[] (22) at (-6,-3);
							\coordinate[] (23) at (-0.7,-3);
							\coordinate[] (24) at (0.5,-3);
							\coordinate[] (25) at (3,-3);
							\coordinate[] (26) at (7.3,-3);
							\coordinate[] (27) at (8.5,-3);
							\coordinate[] (28) at (11,-3);
							\coordinate[] (29) at (-7.7,-3);
							\coordinate[] (30) at (-7.3,-3);
							\coordinate[] (31) at (-6.9,-3);
							\coordinate[] (32) at (1.3,-3);
							\coordinate[] (33) at (1.7,-3);
							\coordinate[] (34) at (2.1,-3);
							\coordinate[] (35) at (9.3,-3);
							\coordinate[] (36) at (9.7,-3);
							\coordinate[] (37) at (10.1,-3);
							\coordinate[label={[label distance=1mm]left:$X$}] (38) at  (-11,1.7);
							\coordinate[label={[label distance=1mm]left:$Y$}] (39) at  (-11,-3);
							
							\draw[draw=none, fill=gray!20] (-11.5,-6.5) rectangle (-4.5, 5.5)node[left=1.6cm][above=0.1cm](){$G_1$};
							\draw[draw=none, fill=gray!20] (-2.5,-6.5) rectangle (12.5, 5.5)node[left=3.7cm][above=0.1cm](){$G_2$};
							\draw[black] (-8,-3) ellipse (3 and 1.5)node[below=0.8cm](){$Y_1$};
							\draw[black] (1,-3) ellipse (3 and 1.5)node[below=0.8cm](){$Y_2$};
							\draw[black] (9,-3) ellipse (3 and 1.5)node[below=0.8cm](){$Y_3$};
							\draw[black] (-10.5,0.5) rectangle (-5.5,3) node[left=1.1cm, above=0.1cm](){$X_1$};
							\draw[black] (-1.5,0.5) rectangle (3.5, 3)node[left=1.1cm][above=0.1cm](){$X_2$};
							\draw[black] (6.5,0.5) rectangle (11.5, 3)node[left=1.1cm][above=0.1cm](){$X_3$};

							\draw[very thick,postaction={on each segment={mid arrow=black}}] (1)--(2);
							\draw[very thick,postaction={on each segment={mid arrow=black}}] (1)--(3);
							\draw[very thick,postaction={on each segment={mid arrow=black}}] (1)--(4);
							\draw[very thick,postaction={on each segment={mid arrow=black}}] (5)--(1);
							\draw[very thick,postaction={on each segment={mid arrow=black}}] (6)--(1);
							\draw[very thick,postaction={on each segment={mid arrow=black}}] (7)--(1);
							\draw[line width=2pt,postaction={on each segment={rid arrow=black}}] (-8,0.5)--(-8,-1.5);
							\draw[line width=2pt,postaction={on each segment={rid arrow=black}}] (1,-1.5)--(1,0.5);

							\node at (1) [vertice] {};
							\node at (2) [vertice] {};
							\node at (3) [vertice] {};
							\node at (4) [vertice] {};
							\node at (5) [vertice] {};
							\node at (6) [vertice] {};
							\node at (7) [vertice] {};
							\node at (8) [vertice] {};
							\node at (9) [vertice] {};
							\node at (10) [vertice] {};
							\node[scale=0.5] at (11) [vertice] {};
							\node[scale=0.5] at (12) [vertice] {};
							\node[scale=0.5] at (13) [vertice] {};
							\node[scale=0.5] at (14) [vertice] {};
							\node[scale=0.5] at (15) [vertice] {};
							\node[scale=0.5] at (16) [vertice] {};
							\node[scale=0.5] at (17) [vertice] {};
							\node[scale=0.5] at (18) [vertice] {};
							\node[scale=0.5] at (19) [vertice] {};
							\node at (20) [vertice] {};
							\node at (21) [vertice] {};
							\node at (22) [vertice] {};
							\node at (23) [vertice] {};
							\node at (24) [vertice] {};
							\node at (25) [vertice] {};
							\node at (26) [vertice] {};
							\node at (27) [vertice] {};
							\node at (28) [vertice] {};
							\node[scale=0.5] at (29) [vertice] {};
							\node[scale=0.5] at (30) [vertice] {};
							\node[scale=0.5] at (31) [vertice] {};
							\node[scale=0.5] at (32) [vertice] {};
							\node[scale=0.5] at (33) [vertice] {};
							\node[scale=0.5] at (34) [vertice] {};
							\node[scale=0.5] at (35) [vertice] {};
							\node[scale=0.5] at (36) [vertice] {};
							\node[scale=0.5] at (37) [vertice] {};
						\end{tikzpicture}
						\caption{{\footnotesize The structure of $4$-free bipartite digraph $G$.}}
						\label{FIG-G-fig3}
					\end{figure}

From the previous definitions we know that \(E(X_1, Y_3) = \emptyset\). As $G$ is $4$-free, $E(X_1,Y_2)=\emptyset$ and $E(Y_1, X_2) = \emptyset$ hold. Notice that \(f(v) = |E(Y_1, X_3)|\), that is, \(f(v)\) equals the number of edges from \(Y_1\) to \(X_3\). One can check that there exists no directed cycle passes through vertices in both \(V(G_1)\) and \(V(G_2)\) once all edges from \(Y_1\) to \(X_3\) are deleted. On the other hand, it is not difficult to deduce that $\gamma(G)\geq \gamma(G_1)+\gamma(G_2)+g(v)+h(v)$. By the induction hypothesis, we obtain
					\[
					\beta(G) \le \beta(G_1) + \beta(G_2) + f(v) \le \frac{\gamma(G_1)}{2} + \frac{\gamma(G_2)}{2} + \frac{g(v) + h(v)}{2} \le \frac{\gamma(G)}{2},
					\]
					as required.

				\section{Proof of Theorem~\ref{turan2k} }\label{sec:proof C_{2k}}

In this section we prove Theorem~\ref{turan2k}. We first establish the upper bound
\((|X|-(k-1))(|Y|-(k-1))+2k-2\) for \(2k\)-free strong bipartite digraphs with partite sets \(X\) and \(Y\), and then construct extremal examples attaining this bound.

				\subsection{Upper bound}
Let $G$ be a $2k$-free strong bipartite digraph with partite sets $X$ and $Y$. The case $k = 1$ follows from Lemma~\ref{lemma1}, so we assume $k\geq 2$. We proceed by induction on $|V(G)|=|X|+|Y|$. By Lemma~\ref{lemma1}, $|X| \ge k+1$ and $|Y| \ge k+1$. The base case $|X| =|Y| = k+1$ is immediate. In the following, assume that the assertion holds for all values smaller than $|X|+|Y|$.
					
					Let $C$ be a shortest directed cycle in $G$. Clearly, $C$ is an induced directed cycle, and we have that $|V(C)| \ge 2k+2$ since $G$ is $2k$-free. If $|V(C)| = V(G)$, then $|E(G)| = |X| + |Y|$, and the desired bound follows. Thus, we can assume that $|V(C)| \le |X| + |Y| - 1$. Let $G^*$ be a strong proper subdigraph of $G$ of maximum order. A subdigraph is said to be proper if it has strictly less than $|X|+|Y|$ vertices. Observe that the existence of $G^*$ is guaranteed by $C$. Let \(n^*=|V(G^*)|\), and let \(X^*=X\cap V(G^*)\) and \(Y^*=Y\cap V(G^*)\). Then \(2k+2\le n^*\le |X|+|Y|-1\), the digraph \(G^*\) is \(2k\)-free, and Lemma~\ref{lemma1} gives \(|X^*|,|Y^*|\ge k+1\).

 To proceed with the proof, we divide the discussion into the following three cases.
					
					 \noindent{\bf{Case \(1\).} \(n^*=n-1\).}
					
					Let $V(G) \setminus V(G^*)=\{u\}$. First suppose that $u \in X$. Then $|Y^*| = |Y|$ and $|X^*| = |X| - 1$. By applying Lemma~\ref{lemma2} we can deduce that there are at least $k-1$ many $y\in Y$ such that $u\nsim y$. This indicates that $d_{G}(u)\leq |Y| - (k-1)$. By the inductive hypothesis, we have
					\begin{align*}
						|E(G)| &= |E(G^*)| + d_{G}(u) \\
						&\le (|X^*| - (k-1))(|Y^*| - (k-1)) + 2k-2 + (|Y| - (k-1)) \\
						&= (|X| - k)(|Y| - k + 1) + 2k - 2 + |Y| - k + 1 \\
						&= (|X| - (k-1))(|Y| - (k-1)) - (|Y| - k + 1) + 2k - 2 + |Y| - k + 1 \\
						&= (|X| - (k-1))(|Y| - (k-1)) + 2k - 2,
					\end{align*}
					as required. 

By using almost identical argument we can also derive the similar conclusion when $u \in Y$.

				 \noindent	{\bf{Case \(2\).} \(n^*=n-2\).}
					
					Let \(V(G) \setminus V(G^*)=\{u,v\} \). We claim that $u\sim v$. Suppose to the contrary that $u\nsim v$. As $G$ is strong, Observation~\ref{observation1} implies that $u$ has both out-neighbors and in-neighbors, and they must lie in $V(G^*)$. It can be derived from Observation~\ref{observation2} that $G[V(G^*)\cup \{u\}]$ is a strong subdigraph of $G$. We note that $G[V(G^*)\cup \{u\}]$ is also proper because $v\notin V(G^*)\cup \{u\}$, contradicting the maximality of $G^*$. This proves that $u\sim v$. By symmetry, we can assume that $u\to v$. Then \(|X| = |X^*| + 1\) and \(|Y| = |Y^*| + 1\).

Let us denote $R=N^+_{G^*}(v)$ and $S=N^-_{G^*}(u)$. Note that both $R$ and $S$ are nonempty because $G$ is strong, and both $N^-_{G^*}(v)$ and $N^+_{G^*}(u)$ are empty by the maximality of $G^*$. Let \(P\) be a shortest directed path in \(G^*\) from a vertex of \(R\) to a vertex of \(S\). Obviously, $G[V(P)\cup \{u,v\}]$ is an induced directed cycle of even length greater than $2k$. It follows that the sum of $d_{G}(u)$ and $d_{G}(v)$ is at most $|X|+|Y| -2k+2$. Consequently, we can obtain that
					\[
					\begin{aligned}
						|E(G)| &= |E(G^*)|+d_{G}(u)+d_{G}(v)-1 \\
						&\le (|X^*| - (k-1))(|Y^*| - (k-1)) + 2k-2 +|X|+|Y|-2k+2-1 \\
						&= (|X| - k)(|Y| - k) +|X|+|Y|-1 \\
						&= |X||Y|-(k-1)|X|-(k-1)|Y|+k^2 -1 \\
						&= (|X|-(k-1))(|Y|-(k-1)) + 2k-2,
					\end{aligned}
					\]
					as required.
					
	 \noindent				{\bf{Case \(3\).} \(n^*\le n-3 \).}

Since $G$ is strong, there must be a vertex $u\in V(G)\setminus V(G^*)$ such that $N^-_{G^*}(u)\neq \emptyset$, and there also exists a directed path starting at $u$ and terminating at some vertex of $G^*$. For convenience, we denote $Q=u_1 u_2 \dots u_\ell$ to be the directed subpath of the previous one. By the maximality of $G^*$, we can derive that $V(Q)=V(G)\setminus V(G^*)$. Moreover, for every $v\in V(G^*)$ and every $i\in[2,\ell-1]$, $v\nsim u_i$, and  $N^+_{G^*}(u_1)=N^-_{G^*}(u_\ell)=\emptyset$. Symmetrically, we can assume that $u_1\in X$, and hence $u_i\in X$\;(resp., $u_i\in Y$) for odd\;(resp., even) $i\in[\ell]$. Let $X_0$ and $Y_0$ denote the sets of vertices of $Q$ with odd and even indices. It is easily seen that \(X=X^*\cup X_0\) and \(Y=Y^*\cup Y_0\), and \(|X_0| - |Y_0|\in\{0,1\}\). We shall consider the following two subcases.
					
					{\bf{Subcase \(3.1\).} \(|X_0|=|Y_0|+1 \).}
					
Let $\ell=2t-1$. Then $t\geq 2$, $|X_0|=t$ and $|Y_0|=t-1$. This yields that \(|X| = |X^*| + t\) and \(|Y| = |Y^*| + t - 1\). By the inductive hypothesis, we get
					
		\[
					\begin{aligned}
						\gamma(G^*) &\ge |X^*||Y^*|-((|X^*| - (k-1))(|Y^*| - (k-1)) +2k-2) \\
						&=|X^*||Y^*|-(|X^*||Y^*|-(k-1)|X^*|-(k-1)|Y^*|+(k-1)^2)-2k+2 \\
						&= (k-1)(|X^*| + |Y^*|) - k^2+1.
					\end{aligned}
					\]
Combining this with the previous results gives
					\[
					\begin{aligned}
						\gamma(G) &\ge (k-1)(|X^*| + |Y^*|) - k^2 +1+(t-1)|X^*| +(t-2)|Y^*|+2(k-1) \\
						&= (k-1)(|X| + |Y|-2t+1) +(t-1)|X^*| +(t-2)|Y^*|- k^2 +2k-1 .
					\end{aligned}
					\]
					Hence
					\[
					\begin{aligned}
						&\gamma(G) - ((k-1)(|X| + |Y|) - k^2+1) \\
						\ge\ &(k-1)(1-2t) +(t-1)(k+1) + (t - 2)(k+1) +2k-2 \\
						=\ &4t - 6 > 0.
					\end{aligned}
					\]
We can thus derive that \(\gamma(G) > (k-1)(|X| + |Y|) - k^2+1\), implying that
$$|E(G)| = |X||Y| - \gamma(G) \le (|X| - (k-1))(|Y| - (k-1)) + 2k-2.$$
					
					{\bf{Subcase \(3.2\).} \(|X_0|=|Y_0|\).}
					
					Let \(\ell= 2t\). Then $t\geq 2$, \(|X_0| = |Y_0|=t \). In addition,  \(|X| = |X^*| + t\) and \(|Y| = |Y^*| + t \) hold. 
As in the previous subcase, we also have that
					$$\gamma(G^*) \geq (k-1)(|X^*| + |Y^*|) - k^2+1,$$
and
					\[
					\begin{aligned}
						\gamma(G) &\ge (k-1)(|X^*| + |Y^*|) - k^2 +1+(t-1)(|X^*| +|Y^*|)+2(k-1)-1 \\
						&=(k-1)(|X|+|Y|-2t)+(t-1)(|X^*| +|Y^*|)- k^2+2k-2.
					\end{aligned}
					\]
	Then the following holds
					\[
					\begin{aligned}
						&\gamma(G) - ((k-1)(|X| + |Y|) - k^2+1) \\
						\ge\ &-2kt+2t+(t-1)(2k+2)+2k-3 \\
						=\ &4t - 5 > 0.
					\end{aligned}
					\]
Consequently, we conclude that \(|E(G)| = |X||Y| - \gamma(G) \le (|X| - (k-1))(|Y| - (k-1)) + 2k-2 \), as required.
					
\subsection{Lower bound}

We now construct a family of extremal bipartite digraphs as below.

\vspace{0.2cm}
\noindent{\bf Construction of $\mathcal{F}_1$:} Let $C=\langle x_{0},y_{0},x_1,y_1,\ldots,x_k,y_k\rangle$ be a directed cycle of length $2k+2$. Let $\mathcal{F}_1$ be a family of digraphs obtained from $C$ by blowing up its vertices $x_{0}$ and $y_0$ into independent sets $X^*$ and $Y^*$, respectively. That is, $X^*\to Y^*$, $Y^*\to x_1$ and $y_k\to X^*$.\;(see Figure~\ref{FIG-G-fig1})
\vspace{0.2cm}

Observe that every member $G$ of $\mathcal{F}_1$ is a $2k$-free strong bipartite digraph with bipartite sets $X = X^* \cup \{x_1, x_2, \dots, x_k\}$ and $Y = Y^* \cup \{y_1, y_2, \dots, y_k\}$, where $|X| \ge k+1$ and $|Y| \ge k+1$. Its number of edges is
			\begin{align*}
				|E(G)| &= |X^*||Y^*| + |X^*| + |Y^*| + 2k - 1 \\
				&= (|X^*| + 1)(|Y^*| + 1) + 2k - 2 \\
				&= (|X| - (k-1))(|Y| - (k-1)) + 2k - 2.
			\end{align*}

	\begin{figure}[t]
				\centering
				\begin{tikzpicture}[line width=1pt,scale=0.50]
					\coordinate[] (1) at (-1.7,3);
					\coordinate[] (2) at (-0.5,3);
					\coordinate[] (3) at (2,3);
					\coordinate[] (4) at (-1.7,-3);
					\coordinate[] (5) at (-0.5,-3);
					\coordinate[] (6) at (2,-3);
					\coordinate[label={[label distance=1mm]above:$x_1$}] (7) at (5,3);
					\coordinate[label={[label distance=1mm]above:$x_2$}] (8) at (7,3);
					\coordinate[label={[label distance=1mm]above:$x_3$}] (9) at (9,3);
					\coordinate[] (10) at (10,3);
					\coordinate[] (11) at (11,3);
					\coordinate[] (12) at (12,3);
					\coordinate[label={[label distance=1mm]above:$x_{k-1}$}] (13) at (13,3);
					\coordinate[label={[label distance=1mm]below:$y_1$}] (14) at (5,-3);
					\coordinate[label={[label distance=1mm]below:$y_2$}] (15) at (7,-3);
					\coordinate[label={[label distance=1mm]below:$y_3$}] (16) at (9,-3);
					\coordinate[] (17) at (10,-3);
					\coordinate[] (18) at (11,-3);
					\coordinate[] (19) at (12,-3);
					\coordinate[label={[label distance=1mm]below:$y_{k-1}$}] (20) at (13,-3);
					\coordinate[] (22) at (0.7,3);
					\coordinate[] (21) at (0.3,3);
					\coordinate[] (23) at (1.1,3);
					\coordinate[] (24) at (0.3,-3);
					\coordinate[] (25) at (0.7,-3);
					\coordinate[] (26) at (1.1,-3);
					\coordinate[label={[label distance=1mm]above:$x_k$}] (27) at (15,3);
					\coordinate[label={[label distance=1mm]below:$y_k$}] (28) at (15,-3);
					
					\draw[black] (0,3) ellipse (3 and 1.5)node[left=1.5cm](){$X^*$};
					\draw[black] (0,-3) ellipse (3 and 1.5)node[left=1.5cm](){$Y^*$};
					\draw[very thick,postaction={on each segment={rid arrow=black}}] (7)--(14);
					\draw[very thick,postaction={on each segment={lid arrow=black}}] (14)--(8);
					\draw[very thick,postaction={on each segment={mid arrow=black}}] (8)--(15);
					\draw[very thick,postaction={on each segment={mid arrow=black}}] (15)--(9);
					\draw[very thick,postaction={on each segment={mid arrow=black}}] (9)--(16);
					\draw[very thick,postaction={on each segment={mid arrow=black}}] (13)--(20);
					\draw[very thick,postaction={on each segment={mid arrow=black}}] (20)--(27);
					\draw[very thick,postaction={on each segment={mid arrow=black}}] (27)--(28);
					\draw[line width=2pt,postaction={on each segment={mid arrow=black}}] (0,1.5)--(0,-1.5);
					\draw[line width=2pt,postaction={on each segment={lid arrow=black}}] (0,-1.5)--(7);
					\draw[line width=2pt,postaction={on each segment={lid arrow=black}}] (28)--(0,1.5);

					\node at (1) [vertice] {};
					\node at (2) [vertice] {};
					\node at (3) [vertice] {};
					\node at (4) [vertice] {};
					\node at (5) [vertice] {};
					\node at (6) [vertice] {};
					\node at (7) [vertice] {};
					\node at (8) [vertice] {};
					\node at (9) [vertice] {};
					\node[scale=0.5] at (10) [vertice] {};
					\node[scale=0.5] at (11) [vertice] {};
					\node[scale=0.5] at (12) [vertice] {};
					\node at (13) [vertice] {};
					\node at (14) [vertice] {};
					\node at (15) [vertice] {};
					\node at (16) [vertice] {};
					\node[scale=0.5] at (17) [vertice] {};
					\node[scale=0.5] at (18) [vertice] {};
					\node[scale=0.5] at (19) [vertice] {};
					\node at (20) [vertice] {};
					\node[scale=0.5] at (21) [vertice] {};
					\node[scale=0.5] at (22) [vertice] {};
					\node[scale=0.5] at (23) [vertice] {};
					\node[scale=0.5] at (24) [vertice] {};
					\node[scale=0.5] at (25) [vertice] {};
					\node[scale=0.5] at (26) [vertice] {};
					\node at (27) [vertice] {};
					\node at (28) [vertice] {};
				\end{tikzpicture}
				\caption{{\footnotesize The construction of $\mathcal{F}_1$.}}
				\label{FIG-G-fig1}
			\end{figure}

Combining the above results, we thus complete our proof of Theorem~\ref{turan2k}.

				\section{Proof of Theorem~\ref{C_{2k}}}\label{sec:proof C_4}

This section is devoted to the proof of Theorem~\ref{C_{2k}}. We first record a structural description of \(4\)-free strong bipartite digraphs in which one part has exactly three vertices.

\vspace{0.2cm}
\noindent{\bf Construction of $\mathcal{F}_2$:} Let $C=\langle x_{1},y_{1},x_2,y_2,x_3,y_3\rangle$ be a directed cycle of length $6$. Let $\mathcal{F}_2$ be a family of digraph obtained from $C$ by blowing up its vertices $y_{1},y_2$ and $y_3$ into independent sets $Y_1,Y_2$ and $Y_3$, respectively. That is, $x_i\to Y_i$ and $Y_i\to x_{i+1}$ for any $i\in [3]$.\;(see Figure~\ref{FIG-G-fig2})
\vspace{0.2cm}

		\begin{figure}[h]
			\centering
			\begin{tikzpicture}[line width=1pt,scale=0.50]
				\coordinate[label={[label distance=1mm]above:$x_1$}] (1) at (-8,2);
				\coordinate[label={[label distance=1mm]above:$x_2$}] (2) at (0,2);
				\coordinate[label={[label distance=1mm]above:$x_3$}] (3) at (8,2);
				\coordinate[] (4) at (-9.7,-3);
				\coordinate[] (5) at (-8.5,-3);
				\coordinate[] (6) at (-6,-3);
				\coordinate[] (7) at (-1.7,-3);
				\coordinate[] (8) at (-0.5,-3);
				\coordinate[] (9) at (2,-3);
				\coordinate[] (10) at (6.3,-3);
				\coordinate[] (11) at (7.5,-3);
				\coordinate[] (12) at (10,-3);
				\coordinate[] (13) at (-7.7,-3);
				\coordinate[] (14) at (-7.3,-3);
				\coordinate[] (15) at (-6.9,-3);
				\coordinate[] (16) at (0.3,-3);
				\coordinate[] (17) at (0.7,-3);
				\coordinate[] (18) at (1.1,-3);
				\coordinate[] (19) at (8.3,-3);
				\coordinate[] (20) at (8.7,-3);
				\coordinate[] (21) at (9.1,-3);
				
				\draw[black] (-8,-3) ellipse (3 and 1.5)node[left=0cm][below=0.8cm](){$Y_1$};
				\draw[black] (0,-3) ellipse (3 and 1.5)node[left=0cm][below=0.8cm](){$Y_2$};
				\draw[black] (8,-3) ellipse (3 and 1.5)node[left=0cm][below=0.8cm](){$Y_3$};
				\draw[very thick,postaction={on each segment={mid arrow=black}}] (1)--(4);
				\draw[very thick,postaction={on each segment={mid arrow=black}}] (1)--(5);
				\draw[very thick,postaction={on each segment={mid arrow=black}}] (1)--(6);
				\draw[very thick,postaction={on each segment={lid arrow=black}}] (2)--(7);
				\draw[very thick,postaction={on each segment={lid arrow=black}}] (2)--(8);
				\draw[very thick,postaction={on each segment={lid arrow=black}}] (2)--(9);
				\draw[very thick,postaction={on each segment={mid arrow=black}}] (3)--(10);
				\draw[very thick,postaction={on each segment={mid arrow=black}}] (3)--(11);
				\draw[very thick,postaction={on each segment={mid arrow=black}}] (3)--(12);
				\draw[very thick,postaction={on each segment={lid arrow=black}}] (4)--(2);
				\draw[very thick,postaction={on each segment={lid arrow=black}}] (5)--(2);
				\draw[very thick,postaction={on each segment={lid arrow=black}}] (6)--(2);
				\draw[very thick,postaction={on each segment={mid arrow=black}}] (7)--(3);
				\draw[very thick,postaction={on each segment={mid arrow=black}}] (8)--(3);
				\draw[very thick,postaction={on each segment={mid arrow=black}}] (9)--(3);
				\draw[very thick,postaction={on each segment={lid arrow=black}}] (10)--(1);
				\draw[very thick,postaction={on each segment={lid arrow=black}}] (11)--(1);
				\draw[very thick,postaction={on each segment={lid arrow=black}}] (12)--(1);

				\node at (1) [vertice] {};
				\node at (2) [vertice] {};
				\node at (3) [vertice] {};
				\node at (4) [vertice] {};
				\node at (5) [vertice] {};
				\node at (6) [vertice] {};
				\node at (7) [vertice] {};
				\node at (8) [vertice] {};
				\node at (9) [vertice] {};
				\node at (10) [vertice] {};
				\node at (11) [vertice] {};
				\node at (12) [vertice] {};
				\node[scale=0.5] at (13) [vertice] {};
				\node[scale=0.5] at (14) [vertice] {};
				\node[scale=0.5] at (15) [vertice] {};
				\node[scale=0.5] at (16) [vertice] {};
				\node[scale=0.5] at (17) [vertice] {};
				\node[scale=0.5] at (18) [vertice] {};
				\node[scale=0.5] at (19) [vertice] {};
				\node[scale=0.5] at (20) [vertice] {};
				\node[scale=0.5] at (21) [vertice] {};
			\end{tikzpicture}
			\caption{{\footnotesize The construction of $\mathcal{F}_2$.}}
			\label{FIG-G-fig2}
		\end{figure}

Let $F$ be any $4$-free strong bipartite digraph with partite sets $X$ and $Y$, where $|X|=3$. We will briefly illustrate that $F\in \mathcal{F}_2$. Observe first that $|Y|\geq 3$ because $F$ is strong and any directed cycle in it has length greater than 4. As $|X|=3$, we know that any directed cycle of $F$ has length exactly 6. Let $C=\langle x_{1},y_{1},x_2,y_2,x_3,y_3\rangle$ be an arbitrary directed cycle of $F$. For any $y\in Y\setminus \{y_1,y_2,y_3\}$, both $d^+_{F}(y)$ and $d^-_{F}(y)$ are positive, and $N_{F}(y)\subseteq X$. If $x_i\to y$, then $y\nrightarrow x_{i-1}$ since $F$ is $4$-free. Moreover, we can check that $N_{F}(y)\neq X$, which implies that $y\to x_{i+1}$ and $y\nsim x_{i-1}$. This shows that $F\in \mathcal{F}_2$.  
A simple calculation gives us that
		$$|E(F)| = 2|Y| = (|X|-1)|Y| = (|X|-1)(|Y|-1) + 2.$$

Now we are ready to prove Theorem~\ref{C_{2k}}.

\vspace{0.2cm}

\noindent\textbf{Proof of Theorem~\ref{C_{2k}}.} Let $G$ be any $4$-free strong bipartite Tur\'{a}n digraph with bipartite sets $X$ and $Y$. Note that both $|X|$ and $|Y|$ are larger than two, and by Theorem~\ref{turan2k}}, $$|E(G)|=(|X|-1)(|Y|-1)+2,$$ which tells us that $\gamma(G)=|X|+|Y|-3$. We will prove this theorem by induction on $|V(G)|=|X|+|Y|$. By symmetry, we may assume that $|Y|\geq |X|$. If $|X|=3$, then $G$ is a member of $\mathcal{F}_2$. Writing $X=\{x_1,x_2,x_3\}$ and $Y=Y_1\cup Y_2 \cup Y_3$, we have $$ \beta(G)=\min\{|Y_1|,|Y_2|,|Y_3|\}\;\;\;\text{and}\;\;\;\gamma(G)=|Y_1|+|Y_2|+|Y_3|.$$ Hence, we infer that $\beta(G)\leq\frac13\gamma(G)$.
			
		In the following, we may assume that \(|Y|\ge |X|\ge 4 \), and that the assertion holds for all values smaller than $|X|+|Y|$. Choose $y\in Y$ with minimum degree in $Y$. We first claim that \(d_G(y)\le |X|-2\). Suppose to the contrary that \(d_G(y)\ge |X|-1\). Then \(d_G(y')\ge |X|-1\) for every \(y'\in Y\). Thereby, we obtain that $$|E(G)|=\sum_{y'\in Y} d_G(y')\ge (|X|-1)|Y|.$$ Together with \(|E(G)|=(|X|-1)(|Y|-1)+2\), we thus derive that \((|X|-1)(|Y|-1)+2\ge (|X|-1)|Y|\), which follows that \(|X|\le 3\), a contradiction. This proves \(d_G(y)\le |X|-2\).

Next, let us show that \(G-y\) is not strong. Suppose for sake of contradiction that \(G-y\) is strong. We note that $G-y$ is a $4$-free strong bipartite digraph with partite sets $X$ and $Y\setminus\{y\}$. By exploiting Theorem~\ref{turan2k}, we have that $$|E(G-y)|\le (|X|-1)(|Y|-2)+2.$$ Using  \(d_G(y)\le |X|-2\), we obtain $$|E(G)|=|E(G-y)|+d_G(y)\le (|X|-1)(|Y|-2)+2+(|X|-2)=(|X|-1)(|Y|-1)+1,$$ contrary to the extremality of $G$. Hence \(G-y\) is not strong.

Let $G_{1}, G_{2},\ldots,G_{s}$ be the strong components of $G-y$, where $s\geq 2$, satisfying that there is no edge from $V(G_j)$ to $V(G_{i})$, where $1\leq i<j\leq s$. Note that each $G_i$ contains either a unique vertex or at least $6$ vertices. 
	For each $i\in[s]$, write
\[
X_i:=V(G_i)\cap X
\qquad\text{and}\qquad
Y_i:=V(G_i)\cap Y.
\]
We further denote
\[
X^*:=X_2\cup\ldots\cup X_{s-1}
\qquad\text{and}\qquad
Y^*:=Y_2\cup\ldots\cup Y_{s-1}.
\]
Then
\[
X=X_1\cup X^*\cup X_s
\qquad\text{and}\qquad
Y=Y_1\cup Y^*\cup Y_s\cup \{y\}.
\]

Clearly, $ N^{+}_{G}(y)\cap X_{1}\neq \emptyset$ and $ N^{-}_{G}(y)\cap X_{s}\neq \emptyset$ because $G$ is strong. We write $G_{i}\cup \{y\}$ for $G[V(G_i)\cup \{y\}]$, and $E(G_i,G_j)$ for $E(V(G_i),V(G_j))$, where $i,j\in[s]$. In particular, we denote \(G^*=G[X^* \cup Y^*]\). We will divide the proof into the following two parts.

\begin{center}
\noindent \textbf{Part I. $G-y$ has exactly two strong components $G_1$ and $G_2$.}
\end{center}

		Then \( X=X_1\cup X_2\) and \(Y=Y_1\cup Y_2\cup \{y\}\).
					We shall consider the following four cases.
					
					\smallskip
					\noindent
					\textbf{Case 1. Both $G_1\cup\{y\}$ and $G_2\cup\{y\}$ are strong.}
					
		By Observation~\ref{observation1}, there must exist vertices \(x_1^*\in X_1\) and \(x_2^*\in X_2\) such that \(yx_1^*\in E(G)\) and \(x_2^*y\in E(G)\). Note that for each $i\in[2]$, $G_i$ is also strong, and both $X_i$ and $Y_i$ contain at least three vertices. Again by Observation~\ref{observation1} we know that $x_1^*$ has at least one out-neighbor $y_1^*\in Y_1$ and $x_2^*$ has at least one in-neighbor $y_2^*\in Y_2$. Since $G$ is $4$-free, it is clear that neither \(x_1^*y_2^*\) nor \(y_1^*x_2^*\) can be an edge. Thereby $|E(X_1,Y_2)|\leq |X_1||Y_2|-1$ and $|E(Y_1,X_2)|\leq |X_2||Y_1|-1$, yielding that $$|E(G_1,G_2)|\le |X_1||Y_2|+|X_2||Y_1|-2.$$ On the other hand, applying Theorem~\ref{turan2k}, we can deduce that \(|E(G_1\cup\{y\})|\le (|X_1|-1)|Y_1|+2\) and \(|E(G_2\cup\{y\})|\le (|X_2|-1)|Y_2|+2\). Therefore, we have
					\[
					\begin{aligned}
						|E(G)|
						&= |E(G_1\cup\{y\})|+|E(G_2\cup\{y\})|+|E(G_1,G_2)|\\
						&\le (|X_1|-1)|Y_1|+2+(|X_2|-1)|Y_2|+2 +|X_1||Y_2|+|X_2||Y_1|-2\\
						&=(|X|-1)(|Y|-1)+2.
					\end{aligned}
					\]

Since $G$ is a bipartite Tur\'{a}n digraph and equality holds in the preceding bound, one can see that \(|E(G_1\cup\{y\})|=(|X_1|-1)|Y_1|+2\), \(|E(G_2\cup\{y\})|=(|X_2|-1)|Y_2|+2\) and  \(|E(G_1,G_2)|= |X_1||Y_2|+|X_2||Y_1|-2\). Indeed, \(|E(X_1,Y_2)|=|X_1||Y_2|-1\) and \(|E(Y_1,X_2)|=|X_2||Y_1|-1\) hold. From the above argument, we can further derive that $y$ has a unique out-neighbor in $X_1$, and also a unique in-neighbor in $X_2$, that is,  \(N^+_{G_1}(y)=\{x_1^*\}\) and \(N^-_{G_2}(y)=\{x_2^*\}\).

On the other hand, by Lemma~\ref{lemma2}, there exists a vertex $u\in X_1$\;(resp., $w\in X_2$) such that $y\nsim u$\;(resp., $y\nsim w$). It yields that $d_{G_1}(y)\leq |X_1|-1$ and $d_{G_2}(y)\leq |X_2|-1$. Together with $|E(G_i)|\leq (|X_i|-1)
(|Y_i|-1)+2$, we thus get$$|E(G_i\cup\{y\})|\leq (|X_i|-1)(|Y_i|-1)+2+|X_i|-1=(|X_i|-1)|Y_i|+2,$$
which implies that $d_{G_i}(y)=|X_i|-1$ and $|E(G_i)|=(|X_i|-1)(|Y_i|-1)+2$, where $i\in[2]$. As a consequence, we obtain that both $G_i$ and $G_i\cup \{y\}$ are bipartite Tur\'{a}n digraph for any $i\in[2]$. From the inductive hypothesis, one easily infers \(\beta(G_i)\le \frac13\gamma(G_i)\) and \(\beta(G_i\cup\{y\})\le \frac13\gamma(G_i\cup\{y\})\) for every $i\in [2]$.

Every directed cycle of \(G\) is either contained in \(G_1\cup\{y\}\), contained in \(G_2\), or uses the edge \(x_2^*y\). It is not difficult to see that every directed cycle of $G\setminus x_2^*y$ is contained entirely in either \(G_1\cup\{y\}\) or \(G_2\). Hence, we get \(\beta(G)\le \beta(G_1\cup\{y\})+\beta(G_2)+1\).

In summary, we obtain
					\[
					\begin{aligned}
						\beta(G)
						&\le \beta(G_1\cup\{y\})+\beta(G_2)+1\\
                                &\le \frac13\gamma(G_1\cup\{y\})+\frac13\gamma(G_2)+1\\
						&= \frac13(|X_1|+|Y_1|-2)+\frac13(|X_2|+|Y_2|-3)+1\\
						&= \frac13(|X|+|Y|-3)= \frac13\gamma(G).
					\end{aligned}
					\]
					
					\smallskip
					\noindent
					\textbf{Case 2. $G_1\cup\{y\}$ is strong, whereas $G_2\cup\{y\}$ is not strong.}
					
					\smallskip
					\noindent
					\textbf{(2.1) \(G_2\) is nontrivial.}
					
			Similarly, there is a vertex \(x_1^*\in X_1\)\;(resp., $y_1^*\in Y_1$) such that \(yx_1^*\in E(G)\)\;(resp., $x_1^*y_1^*\in E(G)$). Since \(G\) is \(4\)-free, every directed cycle in \(G_2\) has length at least \(6\), which means that \(|X_2|\ge 3\) and \(|Y_2|\ge 3\). Additionally, \(|E(G_1\cup\{y\})|\le (|X_1|-1)|Y_1|+2\) and \(|E(G_2)|\le (|X_2|-1)(|Y_2|-1)+2\). Let us denote $W=N^-_{G_2}(y)\subseteq X_2$. The set $W$ is nonempty since $G$ is strong, and \(N^+_{G_2}(y) = \emptyset\) since \(G_2 \cup \{y\}\) is not strong.
			For every \(w\in W\), there exists a vertex \(w^*\in Y_2\) such that \(w^*w\in E(G)\). Since \(G\) is \(4\)-free, we have \(x_1^*\nrightarrow w^*\).
			Hence $|E(X_1,Y_2)|\leq |X_1||Y_2|-1$. Meanwhile, it is evident that $E(y_1^*, W)=\emptyset$, which follows that $|E(Y_1,X_2)|\leq |X_2||Y_1|-|W|$.
					
By the above results, we thus deduce that 
					\[
					\begin{aligned}
						|E(G)|
						&= |E(G_1\cup\{y\})|+|E(G_2)|+|E(G_2,y)|+|E(G_1,G_2)|\\
						&\le (|X_1|-1)|Y_1|+2+(|X_2|-1)(|Y_2|-1)+2+|W|+(|X_1||Y_2|-1)+(|X_2||Y_1|-|W|)\\
						&=(|X|-1)(|Y|-1)-|X_2|+4\\
						&\leq (|X|-1)(|Y|-1)+1\\
                                &< (|X|-1)(|Y|-1)+2,
					\end{aligned}
					\]
					contradicting the fact that $G$ is a bipartite Tur\'{a}n digraph.
					
					\smallskip
					\noindent
					\textbf{(2.2) \(G_2\) is a singleton component.}
					
					One easily checks that $G_2$ contains a unique vertex which lies in \(X\). Thus \(|X_2|=1\) and \(|Y_2|=0\). The similar counting argument gives
					\[
					\begin{aligned}
						|E(G)|
						&=|E(G_1\cup\{y\})|+|E(G_2,y)|+|E(G_1,G_2)|\\
						&\le (|X_1|-1)|Y_1|+2+1+(|Y_1|-1)\\
						&=|X_1||Y_1|+2\\
						&=(|X|-1)(|Y|-1)+2.
					\end{aligned}
					\]

Recall that $G$ is a bipartite Tur\'{a}n digraph, equality yields \(|E(G_1\cup\{y\})|=(|X_1|-1)|Y_1|+2\).  From Lemma~\ref{lemma2} we know that there exists a vertex $u\in X_1$ with $y\nsim u$, and hence $d_{G_1}(y)\leq |X_1|-1$. Combining this with $|E(G_1)|\leq (|X_1|-1)
(|Y_1|-1)+2$, one can derive that$$|E(G_1\cup\{y\})|\leq (|X_1|-1)(|Y_1|-1)+2+|X_1|-1=(|X_1|-1)|Y_1|+2,$$ 
yielding that $d_{G_1}(y)=|X_1|-1$ and $|E(G_1)|=(|X_1|-1)(|Y_1|-1)+2$. Hence, we conclude that \(d_G(y)=d_{G_1}(y)+1=|X_1|=|X|-1\), contradicting the assumption that \(d_G(y)\le |X|-2\).
					
					\smallskip
					\noindent
					\textbf{Case 3. $G_1\cup\{y\}$ is not strong, but $G_2\cup\{y\}$ is strong.}

			We can deduce, by applying almost identical method as that of {\textbf{Case 2}}, that such $G$ also does not exist.

					\smallskip
					\noindent
					\textbf{Case 4. Neither \(G_1\cup\{y\}\) nor \(G_2\cup\{y\}\) is strong.}
					
					Notice first that at least one of \(G_1\) and \(G_2\) is nontrivial. We distinguish the following three subcases.
					
					\smallskip
					\noindent
					\textbf{(4.1) Both \(G_1\) and \(G_2\) are nontrivial.}
					
					Since \(G_i\) is a nontrivial strong bipartite digraph, we get $|X_i|\geq 3$, and by the inductive hypothesis, we further have \(|E(G_i)|\le (|X_i|-1)(|Y_i|-1)+2\), where $i\in[2]$. 
					
					It can be derived from Observation~\ref{observation2} that \(y\) cannot have both out-neighbors and in-neighbors in the same \(G_i\) because \(G_i\cup\{y\}\) is not strong, where \(i\in[2]\). Moreover, since \(G\) is strong, there must exist an edge from \(y\) to \(G_1\) and an edge from \(G_2\) to \(y\). Thus every edge incident with \(y\) is either directed from \(y\) to \(X_1\), or from \(X_2\) to \(y\), implying that \(d_G(y)=d^+_{G_1}(y)+d^-_{G_2}(y)\). Let us denote $W_1=N^+_{G_1}(y)$ and $W_2=N^-_{G_2}(y)$. As both $G_1$ and $G_2$ are strong, there exists a vertex $w_1^+\in Y_1$ which is dominated by some vertex of $W_1$, and there exists a vertex $w_2^-$ which dominates some vertex of $W_2$. It is clear that there is no edge from $W_1$ to $w_2^-$, and no edge from $w_1^+$ to $W_2$ because $G$ is $4$-free. This indicates that \(|E(X_1,Y_2)|\le |X_1||Y_2|-d^+_{G_1}(y)\) and \(|E(Y_1,X_2)|\le |X_2||Y_1|-d^-_{G_2}(y)\). Thereby \(|E(G_1,G_2)|\le |X_1||Y_2|+|X_2||Y_1|-d^+_{G_1}(y)-d^-_{G_2}(y)\). By combining these estimates, we get
					\[
					\begin{aligned}
						|E(G)|
						&\le |E(G_1)|+|E(G_2)|+|E(G_1,G_2)|+d_G(y)\\
						&\le (|X_1|-1)(|Y_1|-1)+2+(|X_2|-1)(|Y_2|-1)+2
						+|X_1||Y_2|+|X_2||Y_1|\\
						&=(|X|-1)(|Y|-1)-|X|+6\\
                                &\leq (|X|-1)(|Y|-1)\\
                                &< (|X|-1)(|Y|-1)+2,
					\end{aligned}
					\]
					 contradicting the fact that \(G\) is a bipartite Tur\'{a}n digraph.
					
					\smallskip
					\noindent
					\textbf{(4.2) \(G_1\) is nontrivial, whereas \(G_2\) is a singleton component.}
					
					Similarly, one can see that \(|E(G_1)|\le (|X_1|-1)(|Y_1|-1)+2\). The component $G_2$ consists of a single vertex, say \(x_2\), and $x_2\to y$. Note that \(X=X_1\cup \{x_2\}\) and \(Y=Y_1\cup \{y\}\). Pick arbitrarily a vertex \(x_1^*\in X_1\) such that \(yx_1^*\in E(G)\). Applying Observation~\ref{observation1}, there must be a vertex \(y_1^*\in Y_1\) with \(x_1^*y_1^*\in E(G)\). Since \(G\) is \(4\)-free, \(y_1^*\nrightarrow x_2\). This means that \(|E(G_1,G_2)|=|E(Y_1,X_2)|\le |Y_1|-1\). Combining this with \(d_G(y)\le |X|-2\), we can obtain
					\[
					\begin{aligned}
						|E(G)|
						&\le |E(G_1)|+|E(G_1,G_2)|+d_G(y)\\
						&\le (|X_1|-1)(|Y_1|-1)+2+(|Y_1|-1)+(|X|-2)\\
						&= |X_1||Y_1|-|X_1|+|X|\\
						&=(|X|-1)(|Y|-1)+1\\
						&< (|X|-1)(|Y|-1)+2,
					\end{aligned}
					\]
					also a contradiction.
					
					\smallskip
					\noindent
					\textbf{(4.3) \(G_2\) is nontrivial, whereas \(G_1\) is a singleton component.}
					
					This subcase is symmetric to \textbf{(4.2)}, and we omit its proof here.
					
					\medskip
					This completes the proof of {\textbf{Part I}}.

\begin{center}
\noindent \textbf{Part II. \(G-y\) has at least three strong components.}
\end{center}

					Observe first that \(G_1\) is an initial component and
					\(G_s\) is a  terminal component in \(G-y\), where $s\geq 3$. Since \(G\) is strong, there exist
					\(x_1^*\in X_1\) and \(x_s^*\in X_s\) such that \(yx_1^*\in E(G)\) and
					\(x_s^*y\in E(G)\). For convenience, we denote $R_1=N^+_{G^*}(y)$ and $R_2=N^-_{G^*}(y)$. Then $R_1\cap R_2=\emptyset$ and $R_1\cup R_2\subseteq X^*$.

To proceed with the proof, we will divide the discussion into the following four cases.

					\smallskip
					\noindent
					\textbf{Case 1. Both \(G_1\cup\{y\}\) and \(G_s\cup\{y\}\) are strong.}
					
			In this case, it is easily seen that $|X_i|, |Y_i|\geq 3$ for each $i\in \{1,s\}$. By using Observation~\ref{observation1}, there must be \(y_1^*\in Y_1\) and \(y_s^*\in Y_s\) such that \(x_1^*y_1^*\in E(G)\) and \(y_s^*x_s^*\in E(G)\). We note that \(x_1^*y_s^*\notin E(G)\) and \(y_1^*x_s^*\notin E(G)\). Hence \(|E(X_1,Y_s)|\le |X_1||Y_s|-1\), \(|E(Y_1,X_s)|\le |Y_1||X_s|-1\), and therefore $|E(G_1,G_s)|\leq |X_1||Y_s|+|Y_1||X_s|-2$. 

Additionally, as $G$ is $4$-free, one checks that for any vertex $r\in R_1$\;(resp., $r\in R_2$), $ry_s^*\notin E(G)$\;(resp., $y_1^*r\notin E(G)$). For any $y^*\in Y^*$, the two edges \(x_1^*y^*\) and \(y^*x_s^*\) cannot both appear since otherwise \(\langle y,x_1^*,y^*,x_s^*\rangle\) would be a directed $4$-cycle. This gives us that $$|E(G_1,G^*)|+|E(G^*,G_s)|\leq |X_1||Y^*|+|X^*||Y_1|+|X^*||Y_s|+|X_s||Y^*|-|R_1|-|R_2|-|Y^*|.$$  Combining with \(|E(G^*)|\le |X^*||Y^*|\), we get
					\[
					\begin{aligned}
						|E(G)|
						&= |E(G_1\cup\{y\})|+|E(G_s\cup\{y\})|+|E(G^*)|+|E(G_1,G_s)|+|E(G_1,G^*)|+|E(G^*,G_s)|+d_{G^*}(y)\\
						&\le (|X_1|-1)|Y_1|+2+(|X_s|-1)|Y_s|+2+|X^*||Y^*|+|X_1||Y_s|+|Y_1||X_s|-2\\
                                &\qquad +|X_1||Y^*|+|X^*||Y_1|+|X^*||Y_s|+|X_s||Y^*|-|Y^*|\\
						&= (|X_1|+|X^*|+|X_s|)(|Y_1|+|Y^*|+|Y_s|)-|Y_1|-|Y^*|-|Y_s|+2\\
						&= |X|(|Y|-1)-|Y|+3\\
						&= (|X|-1)(|Y|-1)+2.
					\end{aligned}
					\]

By the condition $G$ is a bipartite Tur\'{a}n digraph, equality holds in the preceding estimates.
We conclude that \(|E(G_1\cup\{y\})|=(|X_1|-1)|Y_1|+2\), \(|E(G_s\cup\{y\})|=(|X_s|-1)|Y_s|+2\),  \(|E(G_1,G_s)|= |X_1||Y_s|+|X_s||Y_1|-2\) and $|E(G^*)|= |X^*||Y^*|$.  More precisely, one can infer that \(|E(X_1,Y_s)|=|X_1||Y_s|-1\) and \(|E(Y_1,X_s)|=|X_s||Y_1|-1\),  and \(N^+_{G_1}(y)=\{x_1^*\}\) and \(N^-_{G_s}(y)=\{x_s^*\}\). According to $|E(G^*)|=|X^*||Y^*|$ and $G$ is $4$-free, we know that $G^*\cup \{y\}$ must be acyclic.

On the other hand, by applying Lemma~\ref{lemma2}, there exists a vertex $u\in X_1$\;(resp., $w\in X_s$) with $y\nsim u$\;(resp., $y\nsim w$). It follows that $d_{G_1}(y)\leq |X_1|-1$ and $d_{G_s}(y)\leq |X_s|-1$. Combining this with $|E(G_i)|\leq (|X_i|-1)
(|Y_i|-1)+2$, one sees that$$|E(G_i\cup\{y\})|\leq (|X_i|-1)(|Y_i|-1)+2+|X_i|-1=(|X_i|-1)|Y_i|+2,$$
yielding that $d_{G_i}(y)=|X_i|-1$ and $|E(G_i)|=(|X_i|-1)(|Y_i|-1)+2$, where $i\in \{1,s\}$. Thus both $G_i$ and $G_i\cup \{y\}$ are bipartite Tur\'{a}n digraphs for each $i\in \{1,s\}$. By the inductive hypothesis, \(\beta(G_i)\le \frac13\gamma(G_i)\) and \(\beta(G_i\cup\{y\})\le \frac13\gamma(G_i\cup\{y\})\) for every $i\in \{1,s\}$.

Notice that any directed cycle of \(G\) is either contained in \(G_1\), contained in \(G_s\), or contains one of the edges \(yx_1^*\) and \(x_s^*y\). It is not hard to deduce that every directed cycle of $G\setminus \{yx_1^*,x_s^*y\}$ is contained entirely in either \(G_1\) or \(G_s\).

If $R_1=\emptyset$, then
					\[
					\begin{aligned}
						\beta(G)
                                &\le \beta(G_1)+\beta(G_s\cup\{y\})+1\\
						&\le \frac13\gamma(G_1)+\frac13\gamma(G_s\cup\{y\})+1\\
						&= \frac13(|X_1|+|Y_1|-3)+\frac13(|X_s|+|Y_s|-2)+1\\
						&= \frac13(|X_1|+|X_s|+|Y_1|+|Y_s|-2)\\
						&\le \frac13(|X|+|Y|-3)
						= \frac13\gamma(G).
					\end{aligned}
					\]
					
Similarly, if $R_2=\emptyset$, then $$\beta(G)
						\le \beta(G_1\cup \{y\})+\beta(G_s)+1
						\leq\frac13\gamma(G).$$		

					
We next turn to the case that both $R_1$ and $R_2$ are nonempty. It is clear that \(|X^*|\geq |R_1\cup R_2|\ge 2\), and we thus have $|X|\geq |X_1|+|X_s|+2$. 
Thereby, the following holds		\[
					\begin{aligned}
						\beta(G)
                                &\leq \beta(G_1)+\beta(G_s)+2\\
						&\le \frac13\gamma(G_1)+\frac13\gamma(G_s)+2\\
						&= \frac13(|X_1|+|Y_1|-3)+\frac13(|X_s|+|Y_s|-3)+2\\
                                &\le \frac13(|X|+|Y|-3)=\frac13\gamma(G).
					\end{aligned}
					\]

					\smallskip
					\noindent
					\textbf{Case 2. \(G_1\cup\{y\}\) is strong, whereas \(G_s\cup\{y\}\) is not strong.}
					
					
					\smallskip
					\noindent
					\textbf{(2.1) \(G_s\) is nontrivial.}
					
					Clearly, both \(G_1\) and \(G_s\) are strong. By using Observation~\ref{observation1}, there must be \(y_1^*\in Y_1\) such that \(x_1^*y_1^*\in E(G)\). Since \(G_s\cup\{y\}\) is not strong but \(G\) is strong, we thus have \(N^+_{G_s}(y)=\emptyset\) by Observation~\ref{observation2}. By the inductive hypothesis we have \(|E(G_1\cup\{y\})|\le (|X_1|-1)|Y_1|+2\) and \(|E(G_s)|\le (|X_s|-1)(|Y_s|-1)+2\). 
					
			Denote by $W=N^-_{G_s}(y)$ and $x_s^*\in W$. Note that there is a vertex $y_s^*\in Y_s$ such that $y_s^*\to x_s^*$. As $G$ is $4$-free, one checks that \(x_1^*y_s^*\notin E(G)\) and there is no edge from $y_1^*$ to any vertex of $W$. This indicates that \(|E(X_1,Y_s)|\le |X_1||Y_s|-1\) and \(|E(Y_1,X_s)|\le |X_s||Y_1|-|W|\).
In addition, it is not difficult to see that for any vertex $r\in R_1$\;(resp., $r\in R_2$), $ry_s^*\notin E(G)$\;(resp., $y_1^*r\notin E(G)$). This means that \(|E(Y_1,X^*)|\le |X^*||Y_1|-|R_2|\) and  \(|E(X^*,Y_s)|\le |X^*||Y_s|-|R_1|\). For any $y^*\in Y^*$, if \(x_1^*y^*\in E(G)\), then $E(y^*,W)=\emptyset$. Hence, \(|E(X_1,Y^*)|+|E(Y^*,X_s)|\le (|X_1|+|X_s|-1)|Y^*|\). Then 
$$|E(G_1,G^*)|+|E(G^*,G_s)|\leq (|X_1|+|X_s|-1)|Y^*|+|X^*|(|Y_1|+|Y_s|)-|R_1|-|R_2|.$$ 

By combining the above results with \(|E(G^*)|\le |X^*||Y^*|\), we conclude that
					\[
					\begin{aligned}
						|E(G)|
						&= |E(G_1\cup\{y\})|+|E(G_s)|+|E(G^*)|+|E(G_1,G_s)|
						+|E(G_1,G^*)|+|E(G^*,G_s)|+d_{G^*\cup G_s}(y)\\
						&\le (|X_1|-1)|Y_1|+2+(|X_s|-1)(|Y_s|-1)+2+|X^*||Y^*|+|X_1||Y_s|+|X_s||Y_1|-1\\
						&\qquad +|X_1||Y^*|+|X_s||Y^*|-|Y^*|
						+|X^*||Y_1|+|X^*||Y_s|\\
						&= (|X_1|+|X^*|+|X_s|-1)(|Y_1|+|Y^*|+|Y_s|)
						-|X_s|+4\\
						&= (|X|-1)(|Y|-1)-|X_s|+4.                  
				\end{aligned}
					\]
					Since \(G_s\) is a nontrivial strong \(4\)-free bipartite digraph, we have \(|X_s|\ge 3\). As a consequence, we get
					\[
					|E(G)|\le (|X|-1)(|Y|-1)+1<(|X|-1)(|Y|-1)+2,
					\]
					which contradicts the fact that \(G\) is a bipartite Tur\'{a}n digraph.

					\smallskip
					\noindent
					\textbf{(2.2) \(G_s\) is a singleton component.}
					
					Here \(X_s=\{x_s^*\}\) and \(Y_s=\emptyset\), and \(x_s^*y\in E(G)\).
					Similarly, there
					exists \(y_1^*\in Y_1\) such that \(x_1^*y_1^*\in E(G)\). Since
					\(G_1\cup\{y\}\) is strong, one sees that
					\(|E(G_1\cup\{y\})|\le (|X_1|-1)|Y_1|+2\).
					
					We next show that \(|E(G^*)|\le |X^*||Y^*|-|R_1|\). It suffices to take the case that \(R_1\neq\emptyset\) because otherwise we                                     
                         are done.  For any
					\(x\in R_1\), choose a shortest directed path
					\(P_1=xy_1'x_1'y_2'\ldots y\) from \(x\) to \(y\) avoiding \(G_1\). Such a
					path must exist as otherwise the acyclic ordering of the strong components
					of \(G-y\) would be violated. Then \(P_1\cup \{yx\}\) is a directed cycle of length 
					 at least \(6\) because \(G\) is \(4\)-free. Hence
					\(y_2'\in Y^*\). From the minimality of \(P_1\) we know that \(xy_2'\notin E(G)\), and from $G$ is $4$-free we get
					 \(y_2'x\notin E(G)\).					
					This indicates that \(x\nsim y_2'\), and consequently
					\(|E(G^*)|\le |X^*||Y^*|-|R_1|\).
					
					Again using the fact that $G$ is 4-free, we can obtain \(|E(G_1,G_s)|=|E(Y_1,x_s^*)|\le |Y_1|-1\), and for every
					\(y^*\in Y^*\), the two edges \(x_1^*y^*\) and \(y^*x_s^*\) cannot both
					occur. Then
					\(|E(X_1,Y^*)|+|E(Y^*,x_s^*)|\le |X_1||Y^*|\). Similarly, for every
					\(x^*\in R_2\), \(y_1^*x^*\notin E(G)\), so
					 \(|E(Y_1,X^*)|\le |X^*||Y_1|-|R_2|\). 

                          Therefore, the following holds
					\[
					\begin{aligned}
						|E(G)|
						&= |E(G_1\cup\{y\})|+|E(G^*)|+|E(G_1,G_s)|
						+|E(G_1,G^*)|+|E(G^*,G_s)|+d_{G^*\cup G_s}(y)\\
						&\le (|X_1|-1)|Y_1|+2+|X^*||Y^*|+|Y_1|-1 +|X_1||Y^*|+|X^*||Y_1|+1\\
						&=(|X_1|+|X^*|)(|Y_1|+|Y^*|)+2\\
						&=(|X|-1)(|Y|-1)+2.
					\end{aligned}
					\]
					Since \(G\) is a bipartite Tur\'an digraph, we conclude that \(|E(G_1\cup\{y\})|=(|X_1|-1)|Y_1|+2\),
					\(|E(G^*)|=|X^*||Y^*|-|R_1|\), \(|E(Y_1,x_s^*)|=|Y_1|-1\),
					\(|E(X_1,Y^*)|+|E(Y^*,x_s^*)|=|X_1||Y^*|\) and
					\(|E(Y_1,X^*)|=|X^*||Y_1|-|R_2|\).
					
					We now verify that the inductive hypothesis can be applied to the
					nontrivial components appearing below. Since
					\(|E(G_1\cup\{y\})|=(|X_1|-1)|Y_1|+2\), by Lemma~\ref{lemma2}, there is a vertex
					\(u\in X_1\) such that \(y\nsim u\), yielding that \(d_{G_1}(y)\le |X_1|-1\).
					By Theorem~\ref{turan2k}, we know \(|E(G_1)|\le (|X_1|-1)(|Y_1|-1)+2\). Thus
					\[
					|E(G_1\cup\{y\})|
					\le (|X_1|-1)(|Y_1|-1)+2+|X_1|-1
					=(|X_1|-1)|Y_1|+2.
					\]
					It follows that \(|E(G_1)|=(|X_1|-1)(|Y_1|-1)+2\), which means \(G_1\) is
					a bipartite Tur\'an digraph and \(d_{G_1}(y)= |X_1|-1\).

Moreover, let us prove that $d^+_{G_1}(y)=1$ and $d^-_{G_1}(y)=|X_1|-2$. Suppose not, there are two vertices $\hat{x}_1,\hat{x}_2\in X_1$ with $y\to \hat{x}_1$ and $y\to \hat{x}_2$. As \(|E(Y_1,x_s^*)|=|Y_1|-1\), there is a unique vertex $\hat{y^*}$ satisfying that $\hat{x}_i\to \hat{y^*}$ for $i=1,2$. If $Y^*=\emptyset$, then $X^*$ is not empty because $s\geq 3$. We claim that any vertex of $X^*$ dominates $y$. Otherwise, 
we are able to find a vertex $\hat{x}^*$ of $X^*$ such that $\hat{x}^*\nrightarrow y$. Then there is a directed path from $\hat{x}^*$ to $y$ in $G[V(G^*)\cup \{y,x_s^*\}]$, contradicting $Y^*=\emptyset$. This shows that every vertex of $X^*$ dominates $y$. Together with \(d_{G_1}(y)= |X_1|-1\), we thus have $d_{G}(y)= |X|-1$, contradicting the fact that $d_{G}(y)\leq |X|-2$. Therefore, we can assume that $Y^*$ is not empty.
There must be a directed path of length two starting at some vertex of $Y^*$ and terminating at $y$. This indicates that
					\[
					|E(X_1,Y^*)|+|E(Y^*,x_s^*)|
					\le |X_1||Y^*|-1,
					\]
which is a contradiction. Consequently, we conclude that $d^+_{G_1}(y)=1$ and $d^-_{G_1}(y)=|X_1|-2$.

					Next, we will verify that there is no nontrivial strong component for any $i$ with
					\(2\le i\le s-1\). Suppose to the contrary that there exists a nontrivial strong component for some $i\in[2,s-1]$. If \(R_1=\emptyset\), then \(|E(G^*)|=|X^*||Y^*|\), so there is no missing edge between \(X^*\) and \(Y^*\). This indicates that no nontrivial strong component exists for any $i$ with $i\in [2,s-1]$, which leads to a contradiction.
					Now we turn to the case that \(R_1\neq\emptyset\). Notice that for every vertex $\hat{x}$ of $X_i$, $d_{G_i}(\hat{x})\leq |Y_i|-1$, namely, there is some $\hat{y}\in Y_i$ such that $\hat{x}\nsim \hat{y}$. Together with the fact that \(|E(G^*)|=|X^*||Y^*|-|R_1|\), we can deduce that $\hat{x}\in R_1$, and thus $X_i\subseteq R_1$. Let $P_2$ be a shortest directed path with initial vertex in $X_i$ and terminal vertex $y$. It is apparent that such $P_2$ must exist, and $P_2\cup y$ is a directed cycle of length at least 6. One easily sees that the initial vertex of $P_2$ is not adjacent to some vertex of $Y^*\setminus Y_i$. Thereby, we infer that \(|E(G^*)|\leq |X^*||Y^*|-|R_1|-1\), contrary to \(|E(G^*)|=|X^*||Y^*|-|R_1|\). This proves that any strong component $G_i$ contains a unique vertex, where $2\leq i\leq s-1$.

					Moving on, we claim that $G^*\cup \{y\}$ is acyclic. Observe first that $G^*$ is acyclic. It is enough to show that $G^*$ contains no directed path from \(R_1\) to \(R_2\). Suppose such a path exists, and let \(P_3=x^+y_1'x_1'y_2'\ldots x^-\) be a
					shortest one, where \(x^+\in R_1\) and \(x^-\in R_2\). Then
					\(P_3\cup\{x^-y,yx^+\}\) is a directed cycle containing \(y\), and its length is at least \(6\) as $G$ is \(4\)-free. It is clear that $x^-$ is not adjacent to some vertex of $Y^*$, which yields that  
					\(|E(G^*)|\leq |X^*||Y^*|-|R_1|-1\), a contradiction. Hence no directed path  from \(R_1\) to \(R_2\) exists
in \(G^*\), and so $G^*\cup \{y\}$ is acyclic.


The proceding structural analysis shows that every directed cycle of $G[V(G^*)\cup \{x_s^*,y\}]$ contains the edge $x_s^*y$. By the choice of $y$, the analogous argument also gives $Y^*\neq \emptyset$. Now we are ready to verify that $\beta(G)\leq \frac13\gamma(G)$.

					
					
					If \(R_1=\emptyset\), then  deleting the edge \(yx_1^*\), together with a feedback
					edge set of $G_1$, destroys all directed cycles of
					\(G\). Therefore, we have
					\[
					\begin{aligned}
						\beta(G)
						&\le \beta(G_1)+1\\
						&\le \frac13\gamma(G_1)+1\\
						&=\frac13(|X_1|+|Y_1|-3)+1\\
						&\leq\frac13(|X|+|Y|-3)=\frac13\gamma(G).
					\end{aligned}
					\]
					
					If \(R_1\neq\emptyset\), then the 4-freeness of $G$, together with the existence of a directed path from every vertex of $R_1$ to $y$, implies $|X^*|\geq 2$ and $|Y^*|\geq 2$.  Deleting the edges \(yx_1^*\) and \(x_s^*y\),
					together with a feedback edge set of \(G_1\), destroys all directed
					cycles of \(G\). Hence, we obtain that
					\[
					\begin{aligned}
						\beta(G)
						&\le \beta(G_1)+2\\
						&\le \frac13\gamma(G_1)+2\\
						&= \frac13(|X_1|+|Y_1|-3)+2\\
						&\leq \frac13(|X|+|Y|-9)+2\\
						&= \frac13(|X|+|Y|-3)=\frac13\gamma(G).
					\end{aligned}
					\]
					

					\smallskip
					\noindent
					\textbf{Case 3. \(G_1\cup\{y\}\) is not strong, whereas \(G_s\cup\{y\}\) is strong.}
					
					By an argument analogous to that used in \textbf{Case 2}, we can prove that $\beta(G)\leq \frac13\gamma(G)$. We omit its proof  here.

					\smallskip
					\noindent
					\textbf{Case 4. Neither \(G_1\cup\{y\}\) nor \(G_s\cup\{y\}\) is strong.}
					
					Since \(G_1\) is an initial component and \(G_s\) is a terminal component of \(G-y\), we have \(d^+_{G_1}(y)>0\) and \(d^-_{G_s}(y)>0\). Additionally, \(d^-_{G_1}(y)=0\) and
					\(d^+_{G_s}(y)=0\) as neither \(G_1\cup\{y\}\) nor \(G_s\cup\{y\}\) is strong.
					
					\smallskip
					\noindent
					\textbf{(4.1) Both \(G_1\) and \(G_s\) are nontrivial.}
					
					 By Theorem~\ref{turan2k},
					\(|E(G_1)|\le (|X_1|-1)(|Y_1|-1)+2\) and
					\(|E(G_s)|\le (|X_s|-1)(|Y_s|-1)+2\) hold. Pick \(x_1^*\in N^+_{G_1}(y)\) and \(x_s^*\in N^-_{G_s}(y)\). By
					Observation~\ref{observation1}, there exist \(y_1^*\in Y_1\) and
					\(y_s^*\in Y_s\) such that \(x_1^*y_1^*\in E(G)\) and
					\(y_s^*x_s^*\in E(G)\). Since \(G\) is \(4\)-free, for every
					\(x\in N^+_{G_1}(y)\), the edge \(xy_s^*\) is absent, and for every \(x\in N^-_{G_s}(y)\), the edge \(y_1^*x\) is absent.
					Then
					\[
					|E(G_1,G_s)|
					\le |X_1||Y_s|+|X_s||Y_1|-d^+_{G_1}(y)-d^-_{G_s}(y).
					\]
					
					 For
					every \(x\in R_1\)\;(resp., \(x\in R_2\)), \(xy_s^*\notin E(G)\)\;(resp., \(y_1^*x\notin E(G)\)). Thus
					\(|E(Y_1,X^*)|\le |X^*||Y_1|-|R_2|\) and
					\(|E(X^*,Y_s)|\le |X^*||Y_s|-|R_1|\). Moreover, for every \(y^*\in Y^*\), the edges
					\(x_1^*y^*\) and \(y^*x_s^*\) cannot both appear since otherwise
					\(\langle y,x_1^*,y^*,x_s^*\rangle\) would be a directed 4-cycle.
					Thereby,
					\(
					|E(X_1,Y^*)|+|E(Y^*,X_s)|
					\le (|X_1|+|X_s|-1)|Y^*|.
					\)
					Combining the above estimates, we get
					\[
						|E(G_1,G^*)|+|E(G^*,G_s)|
						\le |X^*||Y_1|+|X^*||Y_s|+(|X_1|+|X_s|-1)|Y^*|-|R_1|-|R_2| .
					\]
					
					Together with \(d_G(y)=d^+_{G_1}(y)+d^-_{G_s}(y)+|R_1|+|R_2|\), we can obtain
					\[
					\begin{aligned}
						|E(G)|
						&= |E(G_1)|+|E(G_s)|+|E(G^*)|
						+|E(G_1,G_s)|+|E(G_1,G^*)|+|E(G^*,G_s)|+d_G(y)\\
						&\le (|X_1|-1)(|Y_1|-1)+2
						+(|X_s|-1)(|Y_s|-1)+2+|X^*||Y^*|\\
						&\qquad +|X_1||Y_s|+|X_s||Y_1|
						+|X^*||Y_1|+|X^*||Y_s|
						+(|X_1|+|X_s|-1)|Y^*|\\
						&=(|X_1|+|X^*|+|X_s|-1)(|Y_1|+|Y^*|+|Y_s|)
						-|X_1|-|X_s|+6\\
						&=(|X|-1)(|Y|-1)-|X_1|-|X_s|+6.
					\end{aligned}
					\]
					Note that \(|X_1|\ge3\) and \(|X_s|\ge3\).
					We can deduce that \(|E(G)|\le (|X|-1)(|Y|-1)\), contradicting the fact that \(G\) is a
					bipartite Tur\'an digraph.
					
					\smallskip
					\noindent
					\textbf{(4.2) \(G_1\) is nontrivial, whereas \(G_s\) is a singleton component.}
					
					Clearly, \(X_s=\{x_s^*\}\) and \(Y_s=\emptyset\), and \(x_s^*y\in E(G)\).
					From Observation~\ref{observation2} we know that \(d^-_{G_1}(y)=0\), and by Theorem~\ref{turan2k}, we have
					\(|E(G_1)|\le (|X_1|-1)(|Y_1|-1)+2\).
					As $G_1$ is strong, there must be a vertex \(y_1^*\in Y_1\) such that
					\(x_1^*y_1^*\in E(G)\). Moreover, $y_1^*x_s^*\notin E(G)$ as $G$ is 4-free.
					
					Let us show that \(Y^*\neq\emptyset\). Assume the statement is not true and hence \(Y^*=\emptyset\). All edges of \(G-y\) outside \(G_1\) are contained in \(E(Y_1,X\setminus X_1)\). As $s\geq 3$, it is easy to see that \(X^*\neq\emptyset\). It is not hard to check that any vertex of $X^*$ dominates $y$ since $G$ is strong and there is no edge from $G_i$ to $G_1$ for any  $i\in[2,s]$. Again by using $G$ is $4$-free, we can infer that $y_1^*\nrightarrow x$ for every $x\in X^*$. By combining with $d_G(y)\leq |X|-2$, we have the following
					\[
					\begin{aligned}
						|E(G)|
						&=|E(G_1)|+|E(Y_1,X\setminus X_1)|+d_G(y)\\
						&\le (|X_1|-1)(|Y_1|-1)+2+(|X|-|X_1|)(|Y_1|-1)+(|X|-2)\\
						&=(|X|-1)(|Y|-1)+1,
					\end{aligned}
					\]
					a contradiction. This proves that \(Y^*\neq\emptyset\).
					
					 Write $W$ for $N^+_{G_1}(y)$.  Obviously, $x_1^*\in W$, and thus $|W|\geq 1$.  We would like to prove that $ |E(X_1,Y^*)|+|E(Y^*,x_s^*)|\le |X_1||Y^*|-|W|+1$. Denote by $\eta=|E(Y^*,x_s^*)|$. We first deal with $\eta\geq 1$. For every \(y'\in Y^*\) with \(y'x_s^*\in E(G)\) and every
					\(x\in W\), the edge \(xy'\) is absent as otherwise
					\(\langle y,x,y',x_s^*\rangle\) would be a directed \(4\)-cycle. Hence, we get
					\(|E(X_1,Y^*)|\le |X_1||Y^*|-\eta |W|\), and so
					\[
					|E(X_1,Y^*)|+|E(Y^*,x_s^*)|
					\le |X_1||Y^*|-\eta |W|+\eta\le |X_1||Y^*|-|W|+1,
					\]
	as desired. Now we turn to $\eta=0$. Choose any vertex $\hat{y}$ of \(Y^*\). Since \(G\)
					is strong, there must be a directed path from \(\hat{y}\) to $y$. Pick arbitrarily a shortest one say $P$. The penultimate vertex of $P$ belongs to $X^*$ since $\eta=0$. Denote this vertex by $\tilde{x}$ and denote $\tilde{y}$ by its predecessor. Observe that $\tilde{y}\in Y^*$ and there is no edge from $W$ to $\tilde{y}$. This implies that
					\(|E(X_1,Y^*)|+|E(Y^*,x_s^*)|\le |X_1||Y^*|-|W|\). In summary, we conclude that
					\[
					|E(X_1,Y^*)|+|E(Y^*,x_s^*)|
					\le |X_1||Y^*|-|W|+1.
					\]

By similar argument as that of \textbf{(2.2)}, we can derive that \(|E(G^*)|\le |X^*||Y^*|-|R_1|\).
					Moreover, since \(G\) is \(4\)-free, we have \(|E(Y_1,x_s^*)|\le |Y_1|-1\), and for
					every \(x\in R_2\), the edge \(y_1^*x\) is absent, yielding that
					\(|E(Y_1,X^*)|\le |X^*||Y_1|-|R_2|\). Therefore
					\[
					|E(G_1,G^*)|+|E(G^*,G_s)|
					\le |X_1||Y^*|-|W|+1+|X^*||Y_1|-|R_2|.
					\]
					Recall that \(x_s^*y\in E(G)\) and \(d^+_{G_s}(y)=0\). Then
					\(d_G(y)=|W|+|R_1|+|R_2|+1\). 
					
From the above results one can derive that
\[
					\begin{aligned}
						|E(G)|
						&= |E(G_1)|+|E(G^*)|+|E(G_1,G_s)|
						+|E(G_1,G^*)|+|E(G^*,G_s)|+d_G(y)\\
						&\le (|X_1|-1)(|Y_1|-1)+2+|X^*||Y^*|+|Y_1|-1
						+|X_1||Y^*|+1+|X^*||Y_1|+1\\
						&=(|X_1|+|X^*|)(|Y_1|+|Y^*|)-|X_1|+4\\
						&=(|X|-1)(|Y|-1)-|X_1|+4.
					\end{aligned}
					\]
					Since \(G_1\) is nontrivial, we know that \(|X_1|\ge3\). Consequently, we can deduce
					\(|E(G)|\le (|X|-1)(|Y|-1)+1\), which contradicts the fact that \(G\) is a
					bipartite Tur\'an digraph.
					
					\smallskip
					\noindent
					\textbf{(4.3) \(G_1\) is a singleton component, whereas \(G_s\) is nontrivial.}
					
					We can also prove that such $G$ does not exist by nearly identical arguments as that of \textbf{(4.2)}.
					
					\smallskip
					\noindent
					\textbf{(4.4) Both \(G_1\) and \(G_s\) are singleton components.}
					
					Since \(G_1\) and \(G_s\) are singleton components and \(G\) is strong,
					their unique vertices lie in \(X\). Thus \(X_1=\{x_1^*\}\),
					\(X_s=\{x_s^*\}\), and \(yx_1^*,x_s^*y\in E(G)\).
					
					Next we would like to prove that every $G_i$ is a singleton component for every $i\in [2,s-1]$. Suppose, on the contrary, that there is a nontrivial \(G_i\) for some $i$ with $2\leq i\leq s-1$. We note first that both $X_i$ and $Y_i$ contain at least three vertices. By Theorem~\ref{turan2k}, it is obvious that $|E(G_i)|\leq (|X_i|-1)(|Y_i|-1)+2$, namely, there are at least \(|X_i|+|Y_i|-3\) pairs of nonadjacent vertices with both ends in $V(G_i)$. For every \(y'\in Y\setminus\{y\}\), the two edges \(x_1^*y'\) and
					\(y'x_s^*\) cannot appear simultaneously as otherwise
					\(\langle x_s^*,y,x_1^*,y'\rangle\) would be a directed 4-cycle. This means there are at least
					\(|Y|-1\) pairs of nonadjacent vertices with one end in $\{x_1^*,x_s^*\}$ and another in $Y$. From Lemma~\ref{lemma2} one can deduce that any vertex of $X$ has degree at most $|Y|-1$. Particularly, every vertex of \(X\setminus (X_i\cup\{x_1^*,x_s^*\})\) has degree at most $|Y|-1$.
By combining the previous results, we conclude that
					\[
					\begin{aligned}
						|E(G)|
						&\le |X||Y|-\bigl((|X_i|+|Y_i|-3)+(|Y|-1)+(|X|-|X_i|-2)\bigr)\\
						&= |X||Y|-|X|-|Y|-|Y_i|+6.
					\end{aligned}
					\]
Recall that \(G\) is a bipartite Tur\'an digraph and $|Y_i|\geq 3$. One can get \(|Y_i|=3\). In particular, every vertex of \(X_i\) is adjacent to \(y\).
			Otherwise, we can find another pair of nonadjacent vertices which is different from all the above, leading to a contradiction.
					
					We next show that all edges between \(y\) and \(X_i\) have the same
					orientation. If not, then $G_i\cup \{y\}$ is a strong subdigraph of $G$ which is also $4$-free and $y$ has degree $|X_i|$ in $G_i\cup \{y\}$. However, it is impossible by Lemma~\ref{lemma2}. We can assume without loss of generality that \(yx\in E(G)\) for every \(x\in X_i\).
					
					Since \(G\) is strong, there is a directed path from \(G_i\) to $y$. Let $P=\hat{x}_1\hat{y}_1\ldots \hat{x}_r\hat{y}_r$ be such a directed path of minimum length, where $\hat{y}_r=y$. Notice that $V(P)\cap X_i=\hat{x}_1$ and $P\cup (y,\hat{x}_1)$ is a directed cycle, we thus have $r\geq 3$ since $G$ is 4-free. It is evident that  $\hat{y}_{r-1}\notin Y_i$ and $\hat{x}_1\nsim \hat{y}_{r-1}$. That is, we are able to find one more pair of nonadjacent vertices. This indicates that the size of $G$ is strictly less than $(|X|-1)(|Y|-1)+2$, which is a contradiction. This proves that for any $i\in [2,s-1]$, every $G_i$ is a singleton component.
					
					Obviously, \(G-y\) is acyclic, and every directed cycle of \(G\) passes through \(y\).
					By deleting all edges with \(y\) as head\;(resp., tail) we can destroy all
					directed cycles in $G$. Therefore
					\[
					\beta(G)\le \min\{d_G^+(y),d_G^-(y)\}\le \frac12d_G(y)\le \frac12(|X|-2).
					\]
				Together with \(|Y|\ge |X|\), we thus deduce that
					\[
					\beta(G)\le \frac12(|X|-2)\le \frac14(|X|+|Y|-4)
					<\frac13(|X|+|Y|-3)=\frac13\gamma(G).
					\]

					
				\vspace{0.2cm}
					This completes the proof of \textbf{Part II}.

	\vspace{0.2cm}
					Combining the proofs of \textbf{Part I} and \textbf{Part II}, we complete the proof of
Theorem~\ref{C_{2k}}.  \hfill $\blacksquare$

\section{Concluding Remarks}

This paper studies feedback edge sets in bipartite digraphs from the perspective of short directed cycles. For a bipartite digraph \(G\), we let \(\gamma(G)\) count only nonadjacent pairs with ends in distinct partite sets. With this normalization, Theorem~\ref{1/2} gives a general estimate \(\beta(G)\le\gamma(G)/2\) for every \(4\)-free bipartite digraph. We also determined the exact Tur\'an number of \(2k\)-free strong bipartite digraphs with prescribed partite sets, and then used the extremal structure in the case \(k=2\) to prove \(\beta(G)\le\gamma(G)/3\) for all \(4\)-free strong bipartite Tur\'an digraphs.

These results suggest that the correct constant in the first bipartite case should be \(1/3\), not \(1/2\). More generally, they lead to the following possible bipartite analogue of the Chudnovsky--Seymour--Sullivan \cite{chudnovsky2008cycles}\;(3-free) and Sullivan \cite{sullivan2008extremal}\;($k$-free) feedback-edge problems.

\begin{conj}\label{k.2k}
	If \(G\) is a \(2k\)-free bipartite digraph for \(k\ge2\), then
	\[
	\beta(G)\le \frac{1}{k^2-1}\gamma(G).
	\]
\end{conj}

The first nontrivial case is \(k=2\), where directed \(4\)-cycles are the first new even obstruction beyond directed \(2\)-cycles.

\begin{conj}\label{k=2}
	If \(G\) is a \(4\)-free bipartite digraph, then
	\[
	\beta(G)\le \frac13\gamma(G).
	\]
\end{conj}

Theorem~\ref{C_{2k}} verifies Conjecture~\ref{k=2} for the densest possible strong examples, and the balanced three-block construction shows that the coefficient \(1/3\) cannot be improved in that class. The main remaining problem is to remove the Tur\'an extremality assumption and prove, or disprove, Conjecture~\ref{k=2} for all \(4\)-free bipartite digraphs.

Two further directions seem natural. First, it would be interesting to obtain stability or structural results for \(4\)-free bipartite digraphs whose edge number is close to the Tur\'an bound, and to determine whether such near-extremal structure already forces the coefficient \(1/3\). Second, Seymour and Spirkl \cite{seymour2020short} proposed several conjectures relating short directed cycles in bipartite digraphs to minimum out-degree conditions. Since Conjecture~\ref{1.2} is closely connected with feedback-edge inequalities through Conjecture~\ref{1.1}, it would be worthwhile to investigate whether the bipartite feedback-edge perspective developed here can be linked to that short-cycle program.

\subsection*{Acknowledgement}

\noindent

The work was supported by the National Natural Science Foundation of China (Nos. 12471336, 12501473).

\vspace{0.3cm}

%
%


				%
				%

				

			\end{document}